\newtheorem{theorem}{Theorem}
\newtheorem{proposition}[theorem]{Proposition}
\newtheorem{corollary}[theorem]{Corollary}
\newtheorem{lemma}[theorem]{Lemma}
\theoremstyle{definition}
\newtheorem*{def*}{Definition}
\newtheorem{example}{Example}
\newtheorem{remark}{Remark}
\newtheorem*{thm*}{Theorem}
\newtheorem*{lem*}{Lemma}
\newtheorem*{prop*}{Proposition}
\newtheorem*{rem*}{Remark}
\def\id{\operatorname{id}}
\def\SG{{\rm SG}}
\numberwithin{equation}{section}
\title[Small gaps]{Minimal gap in the spectrum \\ of the Sierpinski gasket}
\date{\today}
\author{Patricia Alonso Ruiz}
\address{Department of Mathematics, Texas A{\&}M University, College Station, TX 3368-77843}
\email{paruiz@math.tamu.edu}
\subjclass[2010]{58J50,28A80}
\keywords{spectrum; small gaps; spectral gap; Laplacian; Sierpinski gasket}
\thanks{Research partly supported by the NSF grant DMS~1951577.}
\begin{document}
\begin{abstract}
This paper studies the size of the minimal gap between any two consecutive eigenvalues in the Dirichlet and in the Neumann spectrum of the standard Laplace operator on the Sierpi\'nski gasket. The main result shows the remarkable fact that this minimal gap is achieved and coincides with the spectral gap. The Dirichlet case is more challenging and requires some key observations in the behavior of the dynamical system that describes the spectrum.
\end{abstract}
\maketitle

\section{Introduction}
The standard Laplacian on the Sierpi\'nski gasket (SG) is a non-negative self-adjoint operator with a pure point spectrum consisting of countable many non-negative eigenvalues with finite multiplicity and only accumulation point at infinity. The study of this spectrum goes back to works in physics by Rammal and Tolouse~\cite{RT82,Ram84}, where it was observed that suitable series of eigenvalues in the finite level approximations of SG produced an orbit of a particular dynamical system. This phenomenon was named in~\cite{FS92} \textit{spectral decimation}. There, Fukushima and Shima were able to describe the complete spectrum of the Laplacian on SG by tracing back the orbits of the quadratic polynomial describing the aforementioned dynamical system. 

\medskip

Having a complete description of the spectrum led to extensive research, see e.g.~\cite{KL93,Kig98,Tep98,Str05,Str12}, dealing with the study of its properties. Some of them are 
in stark contrast to more classical settings: The Weyl ratio does not have a limit and in particular the eigenvalue counting function $N(x)$ fails Weyl and Berry's conjecture $N(x)\sim c_d\mathscr{H}^d({\rm SG})x^{d/2}+o(x^{d/2})$ with $d$ the Hausdorff dimension and $\mathscr{H}^d$ the $d$-dimensional Hausdorff measure of ${\rm SG}$, c.f.~\cite{KL93}. The oscillations that prevent the existence of a Weyl ratio are partly due to the existence of large gaps in the spectrum, meaning that if $\lambda_{(1)}\leq \lambda_{(2)}\leq\ldots$ denote the eigenvalues of the Laplacian, then $\limsup_{n}\lambda_{(n+1)}/\lambda_{(n)}>1$, c.f.~\cite{GRS01,Str05,Zho10,HSTZ12}. 

\medskip

As it turns out, the existence of these gaps in the spectrum of $\SG$ is equivalent to the fact that the Julia set describing the spectrum is totally disconnected~\cite[Theorem 2]{HSTZ12}, a property shared by certain classes of fractals, see~\cite{Tep98,HSTZ12} and references therein. 
On the other hand, the presence of exponentially large gaps has some advantageous consequences: For instance, they are responsible for a ``better than usual'' convergence of the Fourier series analogue of a function in $L^2(\SG)$, see~\cite[Theorem 1]{Str05}. Also, they provide a natural candidate for dyadic intervals as described in Section~\ref{S:large_gaps}, whose properties may be useful in the 
study of estimates involving eigenfunctions.

\medskip

Most of the existent work has focused on the structure of large gaps~\cite{DSV99,GRS01,Str05,Zho10}, whereas smaller ones have eluded further investigation.
The study of small gaps in the spectrum of the Laplacian 
can become fairly challenging~\cite{DJ16,BBRR17} and 
the question addressed in the present paper investigates what happens on $\SG$: Is it possible to provide a \textit{uniform lower bound} for the small gaps in the spectrum of the Laplacian? 

\medskip

Theorem~\ref{T:distance_lower_bound} provides a positive and optimal answer: Any two consequent eigenvalues in the Dirichlet or in the Neumann spectrum of the Laplacian on SG are separated \textit{at least} by the spectral gap. In other words, the first and the second eigenvalues are the closest within the whole spectrum. 
Although the existence of \textit{a} lower bound might possibly be derived from an abstract argumentation, proving that the spectral gap \emph{is in fact the smallest} spacing between Dirichlet eigenvalues requires a couple of rather non-trivial observations. 
These constitute the core of Section~\ref{S:small_gaps} and have been established after a careful analysis of the inverse function describing the spectrum of the finite level approximations. Note also that the size of the spacings do \emph{not} appear in an straightforward increasing order: Numerical computations show for example that the third gap is actually smaller than the second, and the seventh gap is smaller than the third, see Table~\ref{F:induction_step_D_table}.

\medskip

This paper is structured as follows: Section~\ref{S:prel} reviews the construction of the Dirichlet and the Neumann spectrum of the Laplacian along with some facts about the large gaps that were not available in the literature in this form. The main result, Theorem~\ref{T:distance_lower_bound}, is presented and proved for both spectra in Section~\ref{S:small_gaps}. Especially the proof of the Dirichlet case relies on several key properties of the dynamical system and the iterative construction of the spectrum. Possible directions 
for future investigation 
are briefly outlined in Section~\ref{S:end}.

\section{Preliminaries and useful facts}\label{S:prel}
For each $m\geq 0$, let $V_m$ denote the vertex set of the finite $n$-level approximation of the Sierpinski gasket ($\SG$ in the sequel) as depicted in Figure~\ref{F:SG_approx}, and let $\Delta_m$ denote the associated graph Laplacian
\begin{equation}
\Delta_m u(p):=\sum_{q\stackrel{m}{\sim} p}(u(q)-u(p)),
\end{equation}
with either Dirichlet or Neumann boundary conditions. 
The standard Laplacian on $\SG$, denoted by $\Delta$ in throughout the paper, is expressible as a limit of suitably renormalized graph Laplacians
\[
\Delta u(x)=\lim_{m\to\infty}5^m\Delta_m u(x),
\]
see e.g~\cite{KL93}. 
Further details concerning the precise construction and properties can be found in the books~\cite{Kig01,Str06}. This section reviews how the Dirichlet and Neumann spectrum of $\Delta_m$ describe that of $\Delta$ and records several useful facts about its structure.
\begin{figure}[H]
\centering
\renewcommand{\arraystretch}{0.5}
\begin{tabular}{cccc}
\begin{tikzpicture}
\tikzstyle{every node}=[draw,circle,fill=black,minimum size=2pt, inner sep=0pt]
\draw ($(0:0)$) node () {} --++ ($(0:2)$) node () {} --++ ($(120:2)$) node () {} --++ ($(240:2)$) node () {};
\end{tikzpicture}
\hspace*{2em}&
\begin{tikzpicture}
\tikzstyle{every node}=[draw,circle,fill=black,minimum size=2pt, inner sep=0pt]
\draw ($(0:0)$) node () {} --++ ($(0:2/2)$) node () {} --++ ($(120:2/2)$) node () {} --++ ($(240:2/2)$) node () {};
\draw ($(0:2/2)$) node () {} --++ ($(0:2/2)$) node () {} --++ ($(120:2/2)$) node () {} --++ ($(240:2/2)$) node () {};
\draw ($(60:2/2)$) node () {} --++ ($(0:2/2)$) node () {} --++ ($(120:2/2)$) node () {} --++ ($(240:2/2)$) node () {};
\end{tikzpicture}
\hspace*{2em}&
\begin{tikzpicture}
\tikzstyle{every node}=[draw,circle,fill=black,minimum size=2pt, inner sep=0pt]
\draw ($(0:0)$) node () {} --++ ($(0:2/4)$) node () {} --++ ($(120:2/4)$) node () {} --++ ($(240:2/4)$) node () {};
\draw ($(0:2/4)$) node () {} --++ ($(0:2/4)$) node () {} --++ ($(120:2/4)$) node () {} --++ ($(240:2/4)$) node () {};
\draw ($(60:2/4)$) node () {} --++ ($(0:2/4)$) node () {} --++ ($(120:2/4)$) node () {} --++ ($(240:2/4)$) node () {};
\foreach \a in {0,60}{
\draw ($(\a:2/2)$) node () {} --++ ($(0:2/4)$) node () {} --++ ($(120:2/4)$) node () {} --++ ($(240:2/4)$) node () {};
\foreach \b in{0,60}{
\draw ($(\a:2/2)+(\b:2/4)$)node () {} --++ ($(0:2/4)$) node () {} --++ ($(120:2/4)$) node () {} --++ ($(240:2/4)$) node () {};
}
}
\end{tikzpicture}
\hspace*{2em}&
\begin{tikzpicture}
\tikzstyle{every node}=[draw,circle,fill=black,minimum size=2pt, inner sep=0pt]
\draw ($(0:0)$) node () {} --++ ($(0:2/8)$) node () {} --++ ($(120:2/8)$) node () {} --++ ($(240:2/8)$) node () {};
\draw ($(0:2/8)$) node () {} --++ ($(0:2/8)$) node () {} --++ ($(120:2/8)$) node () {} --++ ($(240:2/8)$) node () {};
\draw ($(60:2/8)$) node () {} --++ ($(0:2/8)$) node () {} --++ ($(120:2/8)$) node () {} --++ ($(240:2/8)$) node () {};
\foreach \a in {0,60}{
\draw ($(\a:2/4)$) node () {} --++ ($(0:2/8)$) node () {} --++ ($(120:2/8)$) node () {} --++ ($(240:2/8)$) node () {};
\foreach \b in{0,60}{
\draw ($(\a:2/4)+(\b:2/8)$)node () {} --++ ($(0:2/8)$) node () {} --++ ($(120:2/8)$) node () {} --++ ($(240:2/8)$) node () {};
}
}
\foreach \c in{0,60}{
\draw ($(\c:2/2)$) node () {} --++ ($(0:2/8)$) node () {} --++ ($(120:2/8)$) node () {} --++ ($(240:2/8)$) node () {};
\draw ($(\c:2/2)+(0:2/8)$) node () {} --++ ($(0:2/8)$) node () {} --++ ($(120:2/8)$) node () {} --++ ($(240:2/8)$) node () {};
\draw ($(\c:2/2)+(60:2/8)$) node () {} --++ ($(0:2/8)$) node () {} --++ ($(120:2/8)$) node () {} --++ ($(240:2/8)$) node () {};
\foreach \a in {0,60}{
\draw ($(\c:2/2)+(\a:2/4)$) node () {} --++ ($(0:2/8)$) node () {} --++ ($(120:2/8)$) node () {} --++ ($(240:2/8)$) node () {};
\foreach \b in{0,60}{
\draw ($(\c:2/2)+(\a:2/4)+(\b:2/8)$)node () {} --++ ($(0:2/8)$) node () {} --++ ($(120:2/8)$) node () {} --++ ($(240:2/8)$) node () {};
}
}
}


\end{tikzpicture}
\\ [1em]
$V_0$ \hspace*{2em}& $V_1$ \hspace*{2em}& $V_2$ \hspace*{2em}& $V_3$
\end{tabular}
\caption{Graph approximations of the Sierpinski gasket.}
\label{F:SG_approx}
\end{figure}
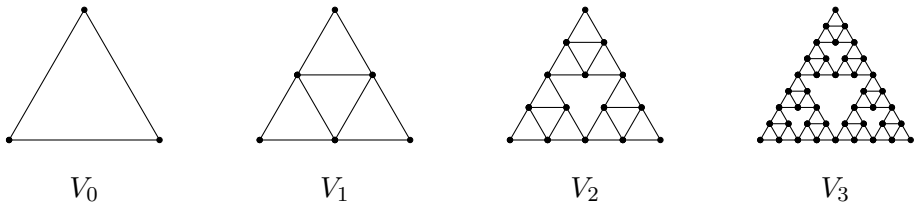
\subsection{Eigenvalues}
Fukushima and Shima described in~\cite{FS92} all Dirichlet eigenvalues of the Laplacian on $\SG$ by means of a genealogical tree as in Figure~\ref{F:genealogy}; a similar one can be made for the Neumann case. In this way, any eigenvalue $\lambda$ of $\Delta$ is related to a unique sequence of ancestors $\{\lambda_m\}_{m\geq j}$ starting at a particular generation $j\geq 1$, where $\lambda_m$ is an eigenvalue of $\Delta_m$. The starting generation $j$ is called the \emph{generation of birth} of $\lambda$ and first ancestors (who start a lineage) may only take the values $2,5$ or $6$. 
As reference to the first ancestor of $\lambda$, one speaks of an $i$-series eigenvalue, where $i=\lambda_j\in\{2,5,6\}$.

\begin{figure}[H]
\begin{tikzpicture}[auto]
\node (j) at ($(0:0)$) {$\lambda_j$};
\node[white] (j1m) at ($(225:1.5)$) {\small $\lambda_{j+1}$};
\node (j1p) at ($(315:1.5)$) {$\lambda_{j+1}$};
\node (j2m) at ($(315:1.5)+(225:1.5)$) {$\lambda_{j+2}$};
\node[white] (j2p) at ($(315:1.5)+(315:1.5)$) {\small $\lambda_{j+2}$};
\node (j3m) at  ($(315:1.5)+(225:1.5)+(225:1.5)$) {$\;\vdots$};
\node (j3p) at  ($(315:1.5)+(225:1.5)+(315:1.5)$) {};
\node (lim) at  ($(315:1.5)+(225:1.5)+(225:1.5)+(270:.75)$) {$\lambda$};
\draw[densely dotted,->] (j) to node [swap] {\footnotesize $\Phi_{-}$} (j1m);
\draw[very thick,->] (j) to node {\footnotesize $\Phi_{+}$} (j1p); 
\draw[densely dotted,->] (j1p) to node {\footnotesize $\Phi_{+}$} (j2p);
\draw[very thick, ->] (j1p) to node [swap] {\footnotesize $\Phi_{-}$} (j2m);
\draw[very thick, ->] (j2m) to node [swap] {\footnotesize $\Phi_{-}$} (j3m);
\draw[densely dotted, ->] (j2m) to node {\footnotesize $\Phi_{+}$} (j3p);
\end{tikzpicture}
\caption{Genealogy tree picture for an eigenvalue born at level $j$.}
\label{F:genealogy}
\end{figure}
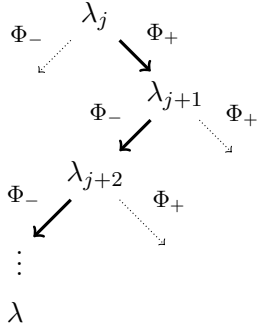

Fukushima-Shima's main result~\cite[Theorem 5.1]{FS92} states that all Dirichlet eigenvalues $\lambda$ satisfy
\begin{equation}\label{E:ev_as_lim_a}
\lambda=\lim_{m\to\infty}5^m\lambda_m
\end{equation}
and the same is true for the Neumann ones. As Rammal and Toulouse observed in~\cite{RT82}, the genealogy of an eigenvalue arises from the inverse functions of the quadratic polynomial $R(z)=z(5-z)$ given by
\begin{equation}\label{E:def_Phi_pm}
\Phi_{\pm}(z)=\frac{1}{2}\big(5\pm\sqrt{25-4z}\big),
\end{equation}
while latter relates each generation of an eigenvalue $\lambda$ to the immediate next by
\begin{equation}\label{E:ev_level_rel}
\lambda_{m}=\lambda_{m+1}(5-\lambda_{m+1}).
\end{equation} 
Once the first ancestor is born, its successor is determined by a choice of one of the inverse functions $\Phi_{-}$ or $\Phi_{+}$ and the same for all subsequent generations. The genealogy of an eigenvalue $\lambda$ with generation of birth $j\geq 1$ is thus described by a sequence $\{\lambda_{j+k}\}_{k\geq 0}$ with $\lambda_j:=i\in\{2,5,6\}$ and
\begin{equation}\label{E:lambda_as_phi}
\lambda_{j+k}
=\Phi_{w|_{k}}(\lambda_j)=\Phi_{w|_{k}}(i):=\Phi_{w_{k}}\circ \Phi_{w_{k-1}}{\circ}\cdots \circ \Phi_{w_1}(i)
\end{equation}
for any $k\geq 1$, where $w:=\ldots w_2w_1\in\{-,+\}^{\mathbb{N}}$. 
Such an eigenvalue is generically called an \emph{$i$-series}; in view of~\eqref{E:ev_as_lim_a} and~\eqref{E:lambda_as_phi} it also admits the expression
\begin{equation}\label{E:ev_as_lim_b}
\lambda
=5^{j}\lim_{k\to\infty}5^{k}\Phi_{w|_{k}}(i),
\end{equation}
which will play a fundamental role in the proof of Theorem~\ref{T:distance_lower_bound}. 
Due to the properties of $\Phi_{\pm}$, c.f. Section~\ref{S:small_gaps}, in order for the limit~\eqref{E:ev_as_lim_b} to exist there must be a specific generation $\ell\geq j$, called the \emph{generation of fixation} of $\lambda$, from which on all descendants are obtained via $\Phi_{-}$. In other words, 
for any $m\geq \ell$,
\begin{equation}\label{E:def_eof}
\lambda_m
=\Phi_{-}^{(m-\ell)}(\lambda_{\ell})=\Phi_{-}^{(m-\ell)}\Phi_{w|_{\ell-1}}(i),
\end{equation}
where $\Phi_{-}^{(n)}$ denotes the $n$-th concatenation of $\Phi_{-}$ and $\Phi_{w|_0}=\id$. Note that $w_{\ell-1}={+}$ as long as $\ell\geq 2$. 

\begin{example}\label{Ex:lowest_D}
The lowest $5$-series and $6$-series Dirichlet eigenvalue have generation of birth $1$ respectively  $2$, and generation of fixation $2$, respectively $4$. They admit the limit representation
\[
\lim_{k\to\infty}5^{k+1}\Phi_{w|_{k}}(5)=:\lambda_0^{(5)},\quad\text{and}\quad\lim_{k\to\infty}5^{k+2}\Phi_{w|_{k}}(6)=:\lambda_0^{(6)},
\]
where in the first case $w_{k}=-$ for all $k\geq 1$ and in the second $w_1=+$ and $w_{k}=-$ for all other $k\geq 2$. The notation used for these eigenvalues follows~\cite[Section 3]{DSV99} and depends on the word $w$ in a non-trivial way which we do not discuss here. In view of~\eqref{E:ev_as_lim_b}, the lowest $5$-series with generation of birth $j\geq 1$ satisfies
\[
\lambda=\lim_{k\to\infty}5^{j+k}\Phi_{w|_{k}}(5)=5^{j-1}\lim_{k\to\infty}5^{k+1}\Phi_{-}^{(k)}(5)=5^{j-1}\lambda_0^{(5)},
\]
and the lowest $6$-series with generation of birth $j\geq 2$ is
\[
\lambda=\lim_{k\to\infty}5^{j+k}\Phi_{w|_{k}}(6)=5^{j-2}\lim_{k\to\infty}5^{k+2}\Phi_{-}^{(k-1)}\Phi_{+}(6)=5^{j-2}\lambda_1^{(6)}.
\]
\end{example}

\subsection{Large gaps}\label{S:large_gaps}
The existence of large gaps in both the Dirichlet and the Neumann spectrum of the Laplacian has been extensively studied in the literature, see e.g.~\cite{Str05,Zho10,HSTZ12}. This section gives a brief account of that phenomenon in the case of the Dirichlet spectrum and records some observations of interest that had not appeared in this form yet.

\medskip

As a non-negative compact self-adjoint operator, $\Delta$ has a pure point spectrum, whose ordered elements are denoted as $0\leq \lambda_{(1)}\leq \lambda_{(2)}\leq\lambda_{(3)}\leq\ldots$ including multiplicity (the notation should not be confused with that of the eigenvalues of a finite level used in the previous section). Two of the most prominent recurrent gaps in the spectrum, described in~\cite[Theorem 1]{Str05} and~\cite[Theorem 5.1]{GRS01}, occur between the eigenvalues
\begin{equation}\label{E:order_largest_gaps}
\lambda_{(N_m-N_{m-1})}<\lambda_{(N_m)}<\lambda_{(N_m+1)},
\end{equation}
where $N_m=\frac{1}{2}(3^{m+1}-3)$, $m\geq 1$, coincides with the size of the spectrum of $\Delta_m$. The eigenvalues in~\eqref{E:order_largest_gaps} correspond to the lowest $5$-series, the lowest $6$-series and the second lowest $5$-series with generation of birth $j=m$ computed as in Example~\ref{Ex:lowest_D}. 
More precisely, 
\begin{equation}\label{E:largest_gaps}
\frac{\lambda_{(N_m)}}{\lambda_{(N_m-N_{m-1})}}
=\frac{\lambda_1^{(6)}}{5\lambda_0^{(5)}}\approx 2.425
\qquad\text{and}\qquad
\frac{\lambda_{(N_m+1)}}{\lambda_{(N_m)}}
=\frac{5\lambda_1^{(5)}}{\lambda_1^{(6)}}\approx 1.271,
\end{equation}
c.f.~\cite[Theorem 5.1]{GRS01}. 
There are more fractals for which similar statements hold, see e.g.~\cite{HSTZ12} and references therein.

\subsection{Dyadic intervals}
In applications, e.g.~\cite{Str05}, the largest gaps in~\eqref{E:largest_gaps} can be used to decompose the positive real line into the analogue of classical ``dyadic intervals''. For any $m\geq 2$, these may be defined as
\begin{equation}\label{E:def_Bk}
B_m:=\begin{cases}
[0,\lambda_{(N_2)}),&m=2,\\
[\lambda_{(N_{m-1})},\lambda_{(N_m)}),&m\geq 3,
\end{cases}
\end{equation}
see Figure~\ref{F:dyadic_blocks}. 

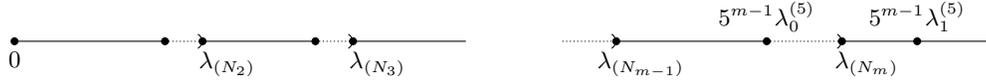
\begin{figure}[H]
\begin{tikzpicture}
\coordinate[label=below: {\footnotesize $0$}] (o) at ($(0:0)$);
\fill (o) circle (1.5pt);
\coordinate (l2L) at ($(0:2)$);
\fill (l2L) circle (1.5pt);
\coordinate[label=below: {\footnotesize $\qquad\lambda_{(N_2)}$}] (l2) at ($(0:2.5)$);
\fill (l2) circle (1.5pt);
\draw (o) -- (l2L);
\draw[densely dotted, -)] (l2L) -- (l2);
\coordinate (l3L) at ($(0:4)$);
\fill (l3L) circle (1.5pt);
\coordinate[label=below: {\footnotesize $\qquad\lambda_{(N_3)}$}] (l3) at ($(0:4.5)$);
\fill (l3) circle (1.5pt);
\draw (l2) -- (l3L);
\draw[densely dotted, -)] (l3L) -- (l3);
\draw (l3) --++ ($(0:1.5)$);
\coordinate[label=below: {\footnotesize $\qquad\lambda_{(N_{m-1})}$}] (lm_1) at ($(0:8)$);
\fill (lm_1) circle (1.5pt);
\draw[densely dotted, (-] (lm_1) --++ ($(180:.75)$);
\coordinate[label=above: {\footnotesize $5^{m-1}\lambda_0^{(5)}$}] (lmL) at ($(0:10)$);
\fill (lmL) circle (1.5pt);
\draw (lm_1) -- (lmL);
\coordinate[label=below: {\footnotesize $\qquad\lambda_{(N_m)}$}] (lm) at ($(0:11)$);
\fill (lm) circle (1.5pt);
\draw[densely dotted, -)] (lmL) -- (lm);
\coordinate[label=above: {\footnotesize $5^{m-1}\lambda_1^{(5)}$}] (lmR) at ($(0:12)$);
\fill (lmR) circle (1.5pt);
\draw (lm) --++ (0:2);
\end{tikzpicture}
\caption{Eigenvalues in dyadic intervals are separated by the largest gaps (dotted).}
\label{F:dyadic_blocks}
\end{figure}

As a consequence of~\eqref{E:largest_gaps}, the distance between any two intervals increases with their index; the distance is in fact comparable to the magnitude of the first eigenvalue in the larger one.

\begin{proposition}
Let $B_m,B_{m'}$ be intervals with $2\leq m< m'$. Then, 
\[
\min\{|\lambda-\lambda'|~\colon~\lambda\in B_m,\lambda'\in B_{m'}\}>\frac{1}{2}\lambda_{(N_{m'-1})}.
\]
\end{proposition}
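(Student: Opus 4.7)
The plan is to note that since $m<m'$, every eigenvalue of $\Delta$ lying in $B_m$ is strictly smaller than every eigenvalue lying in $B_{m'}$, so the minimum is attained at the pair (largest eigenvalue in $B_m$, smallest eigenvalue in $B_{m'}$). Here $\lambda,\lambda'$ are read as eigenvalues in these clusters, consistent with the surrounding discussion, since the interval $B_m$ otherwise contains real numbers inside the large gap that are not eigenvalues. The strategy is then to make these two extremal eigenvalues explicit and to invoke the ratio computed in~\eqref{E:largest_gaps}.

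First I would identify the extremal eigenvalues. The smallest eigenvalue of $\Delta$ in $B_{m'}$ is $\lambda_{(N_{m'-1})}$ by the very definition of $B_{m'}$. The largest eigenvalue of $\Delta$ lying in $B_m$ is $\lambda_{(N_m-1)}$; here one uses that the strict inequalities in~\eqref{E:order_largest_gaps}, together with the monotonicity of $(\lambda_{(k)})_k$, force the indices $N_m-N_{m-1},\dots,N_m-1$ all to label the same eigenvalue, which by the identification in Example~\ref{Ex:lowest_D} equals $5^{m-1}\lambda_0^{(5)}$. The minimum in the proposition therefore simplifies to
\[
\lambda_{(N_{m'-1})}-5^{m-1}\lambda_0^{(5)},
\]
and the claim reduces to $5^{m-1}\lambda_0^{(5)}<\tfrac{1}{2}\lambda_{(N_{m'-1})}$.

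Since $m\leq m'-1$, the monotonicity of $(\lambda_{(k)})_k$ together with the same identification applied at level $m'-1$ gives
\[
5^{m-1}\lambda_0^{(5)}\leq 5^{m'-2}\lambda_0^{(5)}=\lambda_{(N_{m'-1}-N_{m'-2})}.
\]
It therefore suffices to verify $\lambda_{(N_{m'-1}-N_{m'-2})}<\tfrac{1}{2}\lambda_{(N_{m'-1})}$, which is exactly the quantitative statement in~\eqref{E:largest_gaps} that the large-gap ratio $\lambda_1^{(6)}/(5\lambda_0^{(5)})\approx 2.425$ strictly exceeds~$2$. There is no serious obstacle: the whole argument is driven by the fact that the largest recurrent gap in the Dirichlet spectrum is wider than a factor of two, together with the correct reading of the top eigenvalue of $B_m$ in terms of $\lambda_0^{(5)}$.
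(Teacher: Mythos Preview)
Your proposal is correct and follows essentially the same approach as the paper: both identify the minimum as $\lambda_{(N_{m'-1})}-5^{m-1}\lambda_0^{(5)}$ (the paper writes this as $\lambda_{(N_{m'-1})}-\lambda_{(N_m-N_{m-1})}$) and conclude by invoking that the large-gap ratio $g_0=\lambda_1^{(6)}/(5\lambda_0^{(5)})$ from~\eqref{E:largest_gaps} exceeds~$2$. The only cosmetic difference is that you first reduce to the worst case $m=m'-1$ before applying the ratio, whereas the paper keeps the general $m$ and bounds the factor $g_0^{-1}5^{m+1-m'}\leq g_0^{-1}$ directly; your explicit reading of $\lambda,\lambda'$ as eigenvalues is also the intended one, since otherwise adjacent intervals would be at distance zero.
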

\begin{proof}
Let $g_0:=\frac{\lambda_1^{(6)}}{5\lambda_0^{(5)}}>2$. By virtue of~\eqref{E:largest_gaps} and since $\lambda_{(N_{m'})}=5^{m'-m}\lambda_{(N_m)}$ by construction, we have
\[
\lambda_{(N_{m'-1})}-\lambda_{(N_{m}-N_{m-1})}
=\lambda_{(N_{m'-1})}(1-g_0^{-1}5^{m-1-m'})\geq \lambda_{(N_{m'-1})}(1-g_0^{-1})>\frac{1}{2}\lambda_{(N_{m'-1})},
\]
where the last inequality holds because $g_0>2$. 
\end{proof}
The last observation in this paragraph refers to the fact that the gaps are actually so large that the sum of any two eigenvalues, which may belong to the same or to different intervals, always remains within the larger interval.

\begin{proposition}
If $\lambda\in B_m$, $\lambda'\in B_{m'}$ are eigenvalues with $2\leq m\leq m'$, then $\lambda+\lambda'\in B_{m'}$.
\end{proposition}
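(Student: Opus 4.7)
The plan is to check both endpoints of $B_{m'}$ separately. The lower bound is immediate: when $m'\geq 3$, since $\lambda'\in B_{m'}$ and $\lambda\geq 0$, we obtain $\lambda+\lambda'\geq\lambda'\geq\lambda_{(N_{m'-1})}$; the case $m'=2$ forces $m=2$ and the lower bound is just $0$, which is automatic. So the only substantive task is to prove the upper bound $\lambda+\lambda'<\lambda_{(N_{m'})}$.

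The key preliminary observation I would record is that the maximum eigenvalue belonging to $B_m$ is precisely $\lambda_{(N_m - N_{m-1})} = 5^{m-1}\lambda_0^{(5)}$. This is where the structure of the largest gaps from~\eqref{E:order_largest_gaps}--\eqref{E:largest_gaps} enters: the next eigenvalue $\lambda_{(N_m)}=5^{m-2}\lambda_1^{(6)}$ is separated from $5^{m-1}\lambda_0^{(5)}$ by the large gap of ratio $g_0:=\lambda_1^{(6)}/(5\lambda_0^{(5)})>2$, and $\lambda_{(N_m)}$ is by definition the left endpoint of $B_{m+1}$, not of $B_m$ (see Figure~\ref{F:dyadic_blocks}). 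Consequently every $\lambda\in B_m$ satisfies $\lambda\leq 5^{m-1}\lambda_0^{(5)}$ and likewise $\lambda'\leq 5^{m'-1}\lambda_0^{(5)}$.

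Using $m\leq m'$ to bound $5^{m-1}\leq 5^{m'-1}$, the two pointwise bounds combine to give
\[
\lambda+\lambda'\leq (5^{m-1}+5^{m'-1})\lambda_0^{(5)}\leq 2\cdot 5^{m'-1}\lambda_0^{(5)}.
\]
On the other hand, the right endpoint of $B_{m'}$ rewrites as $\lambda_{(N_{m'})}=5^{m'-2}\lambda_1^{(6)}=g_0\cdot 5^{m'-1}\lambda_0^{(5)}$, so the inequality $\lambda+\lambda'<\lambda_{(N_{m'})}$ reduces to $2<g_0$, which is exactly the same numerical fact ($g_0\approx 2.425>2$) invoked in the proof of the previous proposition. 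This concludes the argument. I do not expect any real obstacle here: the only thing to spot is that the upper end of $B_m$ is $5^{m-1}\lambda_0^{(5)}$ rather than $\lambda_{(N_m)}$; once that is in hand, the whole statement is an arithmetic consequence of $g_0>2$.
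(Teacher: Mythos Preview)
Your proof is correct and follows essentially the same route as the paper's: both bound $\lambda+\lambda'$ above by $\lambda_{(N_m-N_{m-1})}+\lambda_{(N_{m'}-N_{m'-1})}$ and then invoke $g_0>2$ to land below $\lambda_{(N_{m'})}$. The only cosmetic difference is that the paper writes the maxima as $g_0^{-1}\lambda_{(N_m)}$ rather than $5^{m-1}\lambda_0^{(5)}$; your treatment of the lower bound (splitting off $m'=2$) is in fact slightly more careful than the paper's one-line version.
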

\begin{proof}
Setting again $g_0:=\frac{\lambda_1^{(6)}}{5\lambda_0^{(5)}}>2$, it follows from~\eqref{E:def_Bk} and~\eqref{E:largest_gaps} that
\begin{multline*}
 \lambda_{(N_{m'-1})}<\lambda+\lambda'\leq \lambda_{(N_{m}-N_{m-1})}+\lambda_{(N_{m'}-N_{m'-1})}\\
 =g_0^{-1}(\lambda_{(N_{m})}+\lambda_{(N_m')})\leq 2g_0^{-1}\lambda_{(N_{m'})}<\lambda_{(N_{m'})}.
\end{multline*}
\end{proof}

\section{Small gaps}\label{S:small_gaps}
The main result of the present paper establishes another remarkable property of both the Dirichlet and the Neumann spectrum of $\Delta$: The minimal spacing between any two distinct eigenvalues \emph{equals} the corresponding spectral gap and hence provides an optimal lower bound for the size of small gaps in the spectrum. The description of the eigenvalues as limits in~\eqref{E:ev_as_lim_a} already suggests that the proof will rely in a careful analysis of the corresponding spectrum of the finite Laplacian $\Delta_m$.

\subsection{Minimal eigenvalue spacing}
To state the theorem precisely, let $\lambda_0^{(2)}$ and $\lambda_0^{(5)}$ denote the lowest $2$-series and $5$-series Dirichlet eigenvalue, and $\lambda_0^{(6)}$ the lowest non-zero Neumann eigenvalue of $\Delta$.  
As limits of the form~\eqref{E:ev_as_lim_b} they admit the expression
\begin{equation}\label{E:spectral_gaps_SG}
\lambda_0^{(2)}=\lim_{k\to\infty}5^{k+1}\Phi_{-}^{(k)}(2),\quad \lambda_0^{(5)}=\lim_{k\to\infty}5^{k+1}\Phi_{-}^{(k)}(5),\quad
\lambda_0^{(6)}=\lim_{k\to\infty}5^{k+2}\Phi_{-}^{(k-1)}\Phi_{+}(6)
\end{equation}
and also satisfy $\lambda_0^{(2)}<\lambda_0^{(5)}<\lambda_0^{(6)}$, c.f.~\cite[Theorem 5.1]{GRS01}, see also Example~\ref{Ex:lowest_D}. 

\begin{theorem}\label{T:distance_lower_bound}
The spacing between any two distinct eigenvalues in the (Dirichlet, or Neumann) spectrum of $\Delta$ is bounded below by the corresponding spectral gap. Precisely,
\begin{equation}\label{E:min_dist_Dir}
\min\{|\lambda-\lambda'|\colon \lambda\neq\lambda'\text{ \rm Dirichlet eigenvalues of }\Delta\}=\lambda_0^{(5)}-\lambda_0^{(2)}
\end{equation}
and
\begin{equation}\label{E:min_dist_Neum}
\min\{|\lambda-\lambda'|\colon \lambda\neq\lambda'\text{ \rm Neumann eigenvalues of }\Delta\}=\lambda_0^{(6)}.
\end{equation}
\end{theorem}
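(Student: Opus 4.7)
The plan is to exploit the limit representation~\eqref{E:ev_as_lim_b}, which writes every eigenvalue of $\Delta$ as $\lambda = 5^j\, F(i,w)$ with $F(i,w):=\lim_{k\to\infty}5^k\Phi_{w|_k}(i)$. Because $\Phi_-'(0)=1/5$ and the tail of any admissible word is eventually $-$, the normalized iterates $5^k\Phi_-^{(k)}(z)$ converge for small $z$, which drives the existence of each $F(i,w)$ and, in particular, of $\lambda_0^{(2)},\lambda_0^{(5)},\lambda_0^{(6)}$. A basic observation I would use repeatedly is that if $\lambda=5^j F(i,w)$ is an eigenvalue, then $5\lambda=5^{j+1}F(i,w)$ is also one (the same genealogy with birth shifted by one generation). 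Thus, up to the factor $5^j$, the spectrum is generated by eigenvalues of lowest generation of birth, and bounding gaps amounts to controlling differences of the form $5^jF(i,w)-5^{j'}F(i',w')$.

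For the Neumann statement~\eqref{E:min_dist_Neum}, the minimum is realized trivially between the eigenvalue $0$ and its successor $\lambda_0^{(6)}$. To verify it is not beaten, I would take two distinct positive Neumann eigenvalues $\lambda=5^jF(i,w)<\lambda'=5^{j'}F(i',w')$ and show $\lambda'-\lambda\geq\lambda_0^{(6)}$. If $j\neq j'$ a crude scaling bound gives $\lambda'-\lambda\geq 4\lambda\geq 4\lambda_0^{(6)}$ immediately. If $j=j'$, then $\lambda'-\lambda = 5^j(F(i',w')-F(i,w))$ amplifies any $F$-separation by at least a factor of $5$, and the minimal spread between two distinct $F(i,w)$-values admits an explicit lower bound obtained at the first index where the genealogies diverge, so the inequality follows.

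The Dirichlet statement~\eqref{E:min_dist_Dir} is much more delicate. The gap $\lambda_0^{(5)}-\lambda_0^{(2)}$ is achieved between the two smallest Dirichlet eigenvalues, both born at $j=1$. To show no other pair is closer, I would again split on generations of birth. When $j\neq j'$, a scaling argument combined with the a priori lower bound $\lambda_0^{(2)}$ on any Dirichlet eigenvalue controls the gap. When $j=j'$, I would find the first index $n$ at which the words $w$ and $w'$ diverge (or note that the ancestors differ), rewrite the difference as $5^{j+n}$ times a bounded quantity depending on the subsequent tails, and use the separation $\Phi_-(z)\leq 5/2\leq\Phi_+(z)$ together with the uniform contraction of $\Phi_-$ to extract a lower bound. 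The main obstacle -- surely where the ``non-trivial observations'' mentioned in the introduction enter -- is that the ordering of gaps at finite levels does \emph{not} descend monotonically to the infinite-level spectrum, so a naive induction on $m$ is insufficient. The argument must therefore classify the small-gap-producing word-pair configurations and verify that the configuration realizing $\lambda_0^{(5)}-\lambda_0^{(2)}$ is the tightest among all of them.
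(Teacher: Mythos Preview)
Your proposal contains a genuine error in the case split. The claim that ``if $j\neq j'$ a crude scaling bound gives $\lambda'-\lambda\geq 4\lambda$'' is false: eigenvalues with different generations of \emph{birth} are thoroughly interleaved in the spectrum. The factor $5^j$ in $\lambda=5^jF(i,w)$ does not separate eigenvalues into disjoint scales, because $F(i,w)$ ranges over an unbounded set as $w$ varies (each initial $+$ in $w$ contributes roughly a factor of~$5$ to $F$). Concretely, within any dyadic block $B_m$ of Section~\ref{S:large_gaps} there are eigenvalues with many different birth generations $j\leq m$, and consecutive ones are certainly not at ratio~$\geq 5$. So the $j\neq j'$ branch of your argument does not close, and the $j=j'$ branch is left as an outline that does not isolate the actual difficulty.

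The paper's proof organizes the argument differently: the controlling parameter is the generation of \emph{fixation} $\ell$, not of birth. Once $m\geq\max(\ell,\ell')$, both genealogies are being iterated by $\Phi_-$ only, so $\lambda_m,\lambda_m'\in\Phi_-^{(m-\ell')}(A_{\ell'}^D\setminus\{6\})$, and the problem reduces to bounding the minimal spacing inside $\Phi_-^{(k)}(A_m^D\setminus\{6\})$ at finite level. The subtle point your sketch misses entirely is that this minimal spacing equals the level-$(m{+}k)$ spectral gap \emph{only for $k\geq 2$}: for $k=0$ and $k=1$ it is strictly smaller (Remark~\ref{R:lowest_Dm}), because of the pair $(\Phi_+\Phi_-^{(m-2)}(2),\,5)$ sitting at distance $\Phi_-^{(m-1)}(2)$. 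Establishing the $k=2$ base case (Lemma~\ref{L:induction_step_D}) is where the real work lies, and it requires the monotonicity Lemmas~\ref{L:key2}--\ref{L:key1} on ratios of the form $\Phi_-^{(m)}(5)/(\Phi_-^{(m+1)}(5)-\Phi_-^{(m+1)}(2))$; neither of these is visible from the word-divergence picture you propose.
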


\begin{proof}[Proof of Theorem~\ref{T:distance_lower_bound}]
We prove the result in the Dirichlet case, the Neumann case follows similarly. Let $\lambda,\lambda'$ be two distinct Dirichlet eigenvalues with generations of birth $j,j'\geq 1$ respectively, generations of fixation $\ell\geq j,\ell'\geq j'$, and associated sequences $\{\lambda_{j+k}\}_{k\geq 0}$ and $\{\lambda_{j'+k}\}_{k\geq 0}$. Without loss of generality we may assume $\ell\leq \ell'$ and $\lambda_\ell\leq \lambda_{\ell'}$. 
By construction, c.f.~\eqref{E:def_eof}, for any $m\geq \ell'$ we have 
$\lambda_{m},\lambda_{m}'\in\Phi_{-}^{(m-\ell')}(A^D_{\ell'}{\setminus}\{6\})$, where $A^D_{\ell'}$ denotes the Dirichlet spectrum of $\Delta_{\ell'}$. 
By virtue of Lemma~\ref{L:pre-lowest_D},
\begin{multline*}
5^{m}\lambda_m-5^m\lambda_m'
\geq  5^{m}\min\{|\lambda-\lambda'|\colon \lambda,\lambda'\in \Phi_{-}^{(m-\ell')}(A^D_{\ell'}{\setminus}\{6\})\}\\
\geq 5^m\Phi_{-}^{(m-1)}(5)-5^m\Phi_{-}^{(m-1)}(2)
\end{multline*}
and in view of~\eqref{E:spectral_gaps_SG}, letting $m\to\infty$ yields~\eqref{E:min_dist_Dir}. For the Neumann case, replace Lemma~\ref{L:pre-lowest_D} by Lemma~\ref{L:gap_AmN} noticing that $\Phi_{+}(6)=3$, and use the fact that zero is a Neumann eigenvalue. 
\end{proof}

\subsection{Key properties of the inverse functions}
We start by recording several useful properties of the functions $\Phi_{\pm}$ that describe the spectrum of $\Delta$. Recall from~\eqref{E:def_Phi_pm} that these are given by
\begin{equation*}
\Phi_{\pm}(z)=\frac{1}{2}\big(5\pm\sqrt{25-4z}\big)
\end{equation*}
and correspond to the inverse functions of the polynomial $R(z)=z(5-z)$. Their graphs, displayed in Figure~\ref{F:inv_fcts_plot}, provide a fairly good insight of the following observations.

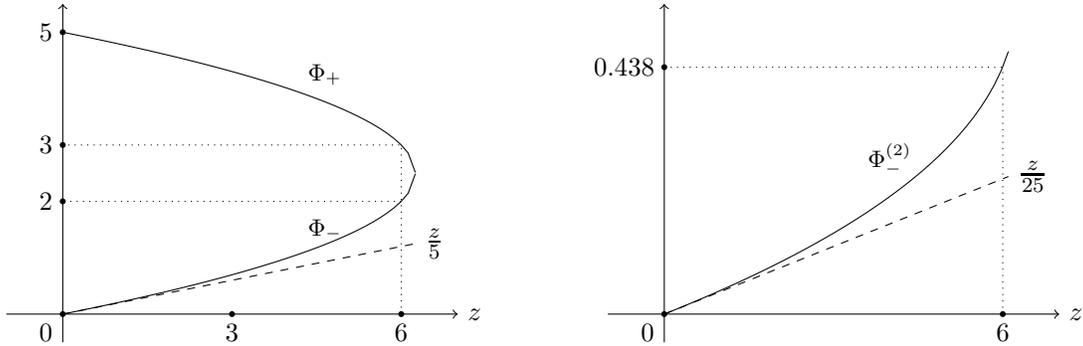
\begin{figure}[H]
\centering
\begin{tabular}{cc}
\begin{tikzpicture}[domain=0:25/4,samples=50,scale=.75]
\draw[->] ($(180:1)$) --++ ($(0:8)$) node[right] {\small $z$}; 
\draw[->] ($(270:.5)$) --++ ($(90:6)$) node {};
\draw plot (\x,{0.5*(5-sqrt(25-4*\x))}) node[right] {};
\draw plot (\x,{0.5*(5+sqrt(25-4*\x))}) node[right] {};
\draw[dashed] plot (\x,{\x/5}) node[right] {$\frac{z}{5}$};
\node (pm) at ($(0:4.65)+(90:1.5)$) {\footnotesize $\Phi_{-}$};
\node (pp) at ($(0:4.65)+(90:4.25)$) {\footnotesize $\Phi_{+}$};
\coordinate[label=below left: {\small $0$}] (x0) at ($(0:0)$);
\fill (x0) circle (1.5pt);
\coordinate[label=below: {\small $3$}] (x3) at ($(0:3)$);
\fill (x3) circle (1.5pt);
\coordinate[label=below: {\small $6$}] (x6) at ($(0:6)$);
\fill (x6) circle (1.5pt);
\coordinate[label=left: {\small $2$}] (y2) at ($(90:2)$);
\fill (y2) circle (1.5pt);
\coordinate[label=left: {\small $3$}] (y3) at ($(90:3)$);
\fill (y3) circle (1.5pt);
\coordinate[label=left: {\small $5$}] (y5) at ($(90:5)$);
\fill (y5) circle (1.5pt);
\draw[dotted] (x6) --++ ($(90:3)$) --++ ($(180:6)$);
\draw[dotted] ($(0:6)+(90:2)$) --++ ($(180:6)$);
\end{tikzpicture}
&\hspace*{2em}
\begin{tikzpicture}[domain=0:6.1,samples=50,scale=.75]
\draw[->] ($(180:1)$) --++ ($(0:8)$) node[right] {\small $z$}; 
\draw[->] ($(270:.5)$) --++ ($(90:6)$) node {};
\draw plot (\x,{5*(5-sqrt(25-2*(5-sqrt(25-4*\x) ) ))}) node[right] {};
\node (pm2) at ($(0:4)+(90:2.75)$) {\footnotesize $\Phi_{-}^{(2)}$};
\draw[dashed] plot (\x,{\x*(10/25)}) node[right] {$\frac{z}{25}$};
\coordinate[label=below left: {\small $0$}] (x0) at ($(0:0)$);
\fill (x0) circle (1.5pt);
\coordinate[label=below: {\small $6$}] (x6) at ($(0:6)$);
\fill (x6) circle (1.5pt);
\draw[dotted] (x6) --++ ($(90:4.38)$);
\coordinate[label=left: {\small $0.438$}] (y6) at ($(90:4.38)$);
\fill (y6) circle (1.5pt);
\draw[dotted] (y6) --++ ($(0:6)$);
\end{tikzpicture}
\end{tabular}
\caption{Inverse functions $\Phi_{-}$, $\Phi_{+}$, and $\Phi_{-}^{(2)}$. Note the rescaled $y$-axis in the latter.}
\label{F:inv_fcts_plot}
\end{figure}

\begin{proposition}\label{P:Phi_props}
For any $0\leq z\leq 6$,

\medskip

\begin{enumerate}[leftmargin=2em,label={\rm (\roman*)},itemsep=0.5em]
\item\label{P:Phi_props_1} $0\leq\Phi_{-}(z)\leq 2$, $3\leq \Phi_{+}(z)\leq 5$ and
\[
\Phi_{-}(0)=0,\quad\Phi_{-}(6)=2,\quad\Phi_{+}(0)=5\quad\text{and}\quad\Phi_{+}(6)=3.
\]
\item\label{P:Phi_props_2}
$\displaystyle \Phi_{+}(z)-\Phi_{-}(z)=\sqrt{25-4z}$ and in particular $\Phi_{-}(z)< \Phi_{+}(z)$.
\end{enumerate}
\end{proposition}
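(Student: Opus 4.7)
\medskip

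The plan is to reduce both items to a direct computation involving the radical $\sqrt{25-4z}$, monitoring it as $z$ ranges over $[0,6]$. First I would observe that for $z\in[0,6]$ the quantity $25-4z$ lies in $[1,25]$, so $\sqrt{25-4z}$ is real and belongs to $[1,5]$. This is the only ``analytic'' input needed: it guarantees that $\Phi_{\pm}$ are well-defined on $[0,6]$ and that the square root does not vanish, which will matter for the strict inequality in \ref{P:Phi_props_2}.

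For \ref{P:Phi_props_1}, I would read off the bounds for $\Phi_{-}$ and $\Phi_{+}$ from the interval $[1,5]$ in which $\sqrt{25-4z}$ lives. Writing $\Phi_{-}(z)=\tfrac{1}{2}(5-\sqrt{25-4z})$, the bounds $1\leq\sqrt{25-4z}\leq 5$ translate immediately into $0\leq \Phi_{-}(z)\leq 2$, and similarly $3\leq \Phi_{+}(z)\leq 5$. The four boundary values are then a matter of plugging in: at $z=0$ we get $\sqrt{25-4z}=5$, yielding $\Phi_{-}(0)=0$ and $\Phi_{+}(0)=5$; at $z=6$ we get $\sqrt{25-4z}=1$, yielding $\Phi_{-}(6)=2$ and $\Phi_{+}(6)=3$.

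For \ref{P:Phi_props_2}, the identity $\Phi_{+}(z)-\Phi_{-}(z)=\sqrt{25-4z}$ is obtained by direct subtraction: the $5$'s cancel and the two half-radicals add up. The strict inequality $\Phi_{-}(z)<\Phi_{+}(z)$ then follows because $\sqrt{25-4z}\geq 1>0$ for $z\in[0,6]$, as noted in the opening step.

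There is no real obstacle in this proposition; it is an elementary warm-up that collects the properties of $\Phi_{\pm}$ that will be invoked repeatedly later. The only point requiring a modicum of care is remembering that the interval of interest is $[0,6]$ rather than the full domain $[0,25/4]$ of $\Phi_{\pm}$, so that the radical stays bounded below by $1$ and the two branches remain strictly separated.
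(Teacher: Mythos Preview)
Your argument is correct and complete; the paper actually states this proposition without proof, treating it as an immediate consequence of the explicit formula~\eqref{E:def_Phi_pm} (and referring the reader to the plot in Figure~\ref{F:inv_fcts_plot}). Your write-up supplies exactly the elementary verification the paper omits, so there is nothing to compare.
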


\begin{lemma}\label{L:equidistant}
For any $0\leq x\leq y\leq 6$, 
\[
\Phi_{-}(y)-\Phi_{-}(x)=\Phi_{+}(x)-\Phi_{+}(y)=\frac{1}{2}(\sqrt{25-4x}-\sqrt{25-4y}).
\]
\end{lemma}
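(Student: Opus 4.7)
The proof is essentially an immediate substitution using the explicit formulas~\eqref{E:def_Phi_pm}. The plan is to compute each of the two differences directly, observe that the constant term $5/2$ cancels in both cases, and recognize that what remains is exactly $\tfrac12(\sqrt{25-4x}-\sqrt{25-4y})$.

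More precisely, starting from $\Phi_{-}(z)=\tfrac12(5-\sqrt{25-4z})$, I would write
\[
\Phi_{-}(y)-\Phi_{-}(x)=\tfrac12\bigl(5-\sqrt{25-4y}\bigr)-\tfrac12\bigl(5-\sqrt{25-4x}\bigr)=\tfrac12\bigl(\sqrt{25-4x}-\sqrt{25-4y}\bigr),
\]
and analogously, from $\Phi_{+}(z)=\tfrac12(5+\sqrt{25-4z})$,
\[
\Phi_{+}(x)-\Phi_{+}(y)=\tfrac12\bigl(5+\sqrt{25-4x}\bigr)-\tfrac12\bigl(5+\sqrt{25-4y}\bigr)=\tfrac12\bigl(\sqrt{25-4x}-\sqrt{25-4y}\bigr).
\]
Finally, the hypothesis $x\le y\le 6$ guarantees $25-4x\ge 25-4y\ge 1>0$, so both square roots are real and their difference is non-negative, consistent with the geometric picture in Figure~\ref{F:inv_fcts_plot}.

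Alternatively, one could observe the even more compact identity $\Phi_{+}(z)+\Phi_{-}(z)=5$ for all $0\le z\le 6$, which is immediate from \eqref{E:def_Phi_pm}; then the equality $\Phi_{-}(y)-\Phi_{-}(x)=\Phi_{+}(x)-\Phi_{+}(y)$ follows by rearrangement, and its common value is then computed as above from either side. There is no real obstacle here, the lemma is a formula that just needs to be read off the definition; the reason to isolate it as a lemma is presumably that this \emph{equidistance} property of $\Phi_{-}$ and $\Phi_{+}$ (they move ordered pairs by the same amount, in opposite directions) will be used repeatedly in the delicate comparisons of Section~\ref{S:small_gaps}.
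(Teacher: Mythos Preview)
Your proof is correct and is exactly the natural approach: direct substitution of the explicit formulas~\eqref{E:def_Phi_pm}. The paper in fact states this lemma without proof, since it is an immediate consequence of the definition of $\Phi_\pm$; your write-up simply makes that verification explicit.
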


Strict convexity will play a fundamental role in proving the key lemmas in Section~\ref{S:small_gaps} and specially that of the composition $\Phi_{-}^{(2)}$. The next lemma is a consequence of this property.

\begin{lemma}\label{L:growing_sizes}
The functions $\Phi_{-}$ and $\Phi_{-}^{(2)}$ are strictly convex on $[0,6]$. In addition, for any $0\leq w\leq x\leq y\leq z\leq 6$,
\begin{enumerate}[leftmargin=2em,label={\rm (\roman*)},itemsep=0.5em]
\item\label{L:g_s-} If $x-w\leq z-y$, then $\displaystyle \Phi_{-}(x)-\Phi_{-}(w)\leq \Phi_{-}(z)-\Phi_{-}(y)$.
\item\label{L:g_s-2} $\displaystyle \Phi_{-}(x)-\Phi_{-}(w)\leq x-w$.
\end{enumerate}
\end{lemma}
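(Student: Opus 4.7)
The plan is to verify each claim by direct computation using the explicit formula $\Phi_{-}(z) = \tfrac12(5-\sqrt{25-4z})$. First I would compute $\Phi_{-}'(z) = (25-4z)^{-1/2}$ and $\Phi_{-}''(z) = 2(25-4z)^{-3/2}$. On $[0,6]$ we have $25-4z \ge 1$, so $\Phi_{-}'$ is well-defined and positive (hence $\Phi_{-}$ is strictly increasing) and $\Phi_{-}''>0$, so $\Phi_{-}$ is strictly convex. For the composition, Proposition~\ref{P:Phi_props}\ref{P:Phi_props_1} gives $\Phi_{-}([0,6]) \subseteq [0,2] \subseteq [0,6]$, so $\Phi_{-}^{(2)}$ is defined on $[0,6]$, and the chain rule yields
\[
(\Phi_{-}^{(2)})''(z) = \Phi_{-}''(\Phi_{-}(z))\,(\Phi_{-}'(z))^{2} + \Phi_{-}'(\Phi_{-}(z))\,\Phi_{-}''(z) > 0,
\]
since each factor is strictly positive on $[0,6]$. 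This gives strict convexity of $\Phi_{-}^{(2)}$.

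For part~\ref{L:g_s-}, the idea is to invoke the standard chord-slope monotonicity of a convex function: if $a < b \le c < d$ lie in the domain of a convex $f$, then $\tfrac{f(b)-f(a)}{b-a} \le \tfrac{f(d)-f(c)}{d-c}$. Applying this with $f = \Phi_{-}$ to $w < x \le y < z$, and then multiplying by $x-w \ge 0$, yields
\[
\Phi_{-}(x)-\Phi_{-}(w) \le (x-w)\cdot\frac{\Phi_{-}(z)-\Phi_{-}(y)}{z-y} \le \Phi_{-}(z)-\Phi_{-}(y),
\]
where the final step uses the hypothesis $x-w \le z-y$. Degenerate cases ($x=w$, $y=z$, or $x=y$) are trivial, either because both sides are nonnegative and one vanishes, or because equalities collapse both sides to the same value (monotonicity of $\Phi_{-}$ is enough here).

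For part~\ref{L:g_s-2}, I would use Lemma~\ref{L:equidistant} together with the conjugate-radical trick:
\[
\Phi_{-}(x)-\Phi_{-}(w) = \tfrac12\bigl(\sqrt{25-4w}-\sqrt{25-4x}\bigr) = \frac{2(x-w)}{\sqrt{25-4w}+\sqrt{25-4x}}.
\]
Since $w,x \in [0,6]$ gives $\sqrt{25-4w},\sqrt{25-4x} \ge 1$, the denominator is at least $2$, and the inequality $\Phi_{-}(x)-\Phi_{-}(w) \le x-w$ follows.

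No step here is a genuine obstacle; the only point that requires a little care is the strict convexity of $\Phi_{-}^{(2)}$, where one must confirm that $\Phi_{-}$ is strictly increasing \emph{and} strictly convex on a domain containing $\Phi_{-}([0,6])$, so that both summands in the second-derivative formula above are strictly positive. Both properties are immediate from the explicit expressions for $\Phi_{-}'$ and $\Phi_{-}''$ on $[0,6]$.
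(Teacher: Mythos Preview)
Your proof is correct and follows essentially the same route as the paper: both arguments hinge on the explicit derivative $\Phi_{-}'(z)=(25-4z)^{-1/2}$ and its monotonicity, with only cosmetic differences---you phrase part~(i) via chord-slope monotonicity where the paper invokes the mean value theorem directly, and you handle part~(ii) by rationalizing the conjugate where the paper uses the bound $\Phi_{-}'\leq 1$ on $[0,6]$.
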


\begin{remark}
All items hold with strict inequality when $0\leq w< x< y< z\leq 6$.
\end{remark}

\begin{proof}
To prove (i), the mean value theorem yields
\begin{equation*}
 \Phi_{-}(x)-\Phi_{-}(w)=\Phi_{-}'(\xi_{wx})(x-w)\qquad w\leq\xi_{wx}\leq x
\end{equation*}
and the same for $y,z$ instead of $w,x$. 
Since $\xi_{wx}\leq x\leq y\leq \xi_{yz}$ and $\Phi_{-}'(\xi)=(25-4\xi)^{-1/2}$, we have $\Phi_{-}'(\xi_{wx})\leq \Phi_{-}'(\xi_{yz})$, hence
\[
 \Phi_{-}(x)-\Phi_{-}(w)=\Phi_{-}'(\xi_{wx})(x-w)\leq \Phi_{-}'(\xi_{yz})(z-y)=\Phi_{-}(z)-\Phi_{-}(y).
\]
Part (ii) follows by symmetry since $\Phi_{+}'(\xi)=-\Phi_{-}'(\xi)$ and (iii) because $\Phi_{-}'([0,6])\subset (0,1)$.
\end{proof}


\subsection{Small gaps at finite level}
This section analyzes the Dirichlet and Neumann spectrum of the finite graph Laplacian $\Delta_m$. The type of computations in both cases are of the same nature, however the Dirichlet case is strikingly much less straightforward and requires a delicate analysis of the inverse function $\Phi_{-}$. 

\medskip

In terms of general notation, $\Phi_{\pm}^{(m)}$ denotes the $m$-th concatenation of $\Phi_{\pm}$ and for completeness $\Phi_{\pm}^{(0)}:={\rm id}$.

\subsubsection{Dirichlet spectrum}
Following~\cite[Theorem 3.1]{FS92} and~\cite[Proposition 5.1]{Kig98}, the Dirichlet spectrum of $\Delta_m$ can be described recursively as
\begin{align}\label{E:def_AmD}
&A_0^D:=\emptyset\qquad A_1^D:=\{2,5\}\qquad A_2^D:=\Phi_{\pm}(A_1)\cup\{5,6\}\notag\\
&A_{m}^D:=\Phi_{\pm}(A_{m-1}^D{\setminus}\{6\})\cup\{3,5,6\},
\quad m\geq 3.
\end{align}


To begin with we determine the spectral gap of $\Delta_{m}$, that is the difference between the first two eigenvalues in $A_m^D$. 

\begin{lemma}\label{L:gap1_AmD}
For any $m\geq 1$, the Dirichlet spectral gap of $\Delta_m$ equals $\Phi_{-}^{(m-1)}(5)-\Phi_{-}^{m-1}(2)$. 
\end{lemma}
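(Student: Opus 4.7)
The plan is to proceed by induction on $m$ and show the stronger statement that the two smallest elements of $A_m^D$ are precisely $\Phi_{-}^{(m-1)}(2)$ and $\Phi_{-}^{(m-1)}(5)$, in that order. Once this is established, the lemma follows immediately by subtraction.

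The base case $m=1$ is trivial since $A_1^D=\{2,5\}$ and $\Phi_{-}^{(0)}=\id$. For the inductive step, I would exploit the recursive description \eqref{E:def_AmD} together with the range properties in Proposition~\ref{P:Phi_props}\ref{P:Phi_props_1}: every element of $\Phi_{-}(A_{m-1}^D{\setminus}\{6\})$ lies in $[0,2]$, while $\Phi_{+}(A_{m-1}^D{\setminus}\{6\})\subset [3,5]$ and the adjoined set $\{3,5,6\}$ (or $\{5,6\}$ when $m=2$) is bounded below by $3$. Consequently, the two smallest elements of $A_m^D$ must arise as $\Phi_{-}$ applied to two elements of $A_{m-1}^D{\setminus}\{6\}$.

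Since $\Phi_{-}$ is strictly increasing on $[0,6]$ (because $\Phi_{-}'(z)=(25-4z)^{-1/2}>0$, also noted implicitly in Lemma~\ref{L:growing_sizes}), it preserves the ordering of inputs. Therefore the two smallest outputs of $\Phi_{-}$ on $A_{m-1}^D{\setminus}\{6\}$ are obtained by applying $\Phi_{-}$ to the two smallest elements of $A_{m-1}^D{\setminus}\{6\}$. By the induction hypothesis these are $\Phi_{-}^{(m-2)}(2)$ and $\Phi_{-}^{(m-2)}(5)$; both lie in $[0,2]$ for $m\geq 3$ and are $\{2,5\}$ for $m=2$, so in either case they are distinct from $6$ and so remain in $A_{m-1}^D{\setminus}\{6\}$. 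Applying $\Phi_{-}$ yields the two smallest elements of $A_m^D$ as $\Phi_{-}^{(m-1)}(2)<\Phi_{-}^{(m-1)}(5)$, completing the induction.

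The only subtle bookkeeping point, which I anticipate as the main thing to double-check rather than a genuine obstacle, is verifying that the two candidate smallest elements of $A_{m-1}^D$ are never equal to $6$ (so that removing $6$ in the recursion does not disturb them) and that the minimum of the adjoined set $\{3,5,6\}$ is strictly larger than the image $\Phi_{-}^{(m-1)}(5)\leq 2$, so that these boundary contributions cannot sneak in among the two smallest. Both follow directly from Proposition~\ref{P:Phi_props}\ref{P:Phi_props_1}, so no further estimates are needed.
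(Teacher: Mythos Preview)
Your proposal is correct and follows essentially the same approach as the paper: both argue by induction that the two smallest elements of $A_m^D$ are $\Phi_{-}^{(m-1)}(2)<\Phi_{-}^{(m-1)}(5)$, using the range separation $\Phi_{-}([0,6])\subset[0,2]$ versus $\Phi_{+}([0,6])\cup\{3,5,6\}\subset[3,6]$ and the monotonicity of $\Phi_{-}$. The only cosmetic difference is that the paper treats $m=1,2,3$ as separate base cases before the general step, whereas you fold $m\geq 2$ into a single inductive step with a parenthetical remark for the $m=2$ recursion; your version is slightly more streamlined but the underlying argument is identical.
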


\begin{proof}
We prove by induction that
\begin{equation*}\label{E:gap1_AmD_01}
\min(A_m^D)=\Phi_{-}^{(m-1)}(2)\qquad\text{ and }\qquad\min(A_m^D{\setminus}\{\Phi_{-}^{(m-1)}(2)\})=\Phi_{-}^{(m-1)}(5).
\end{equation*}
Note that, due to the construction of $A_m^D$, the first three cases need to be considered separately. To ease the notation we write $A_m$ without superscript in what follows.
\begin{itemize}[leftmargin=1em]
\item Case $m=1$. Clear by direct inspection since $\Phi_{-}^{(0)}(2)=2$ and $\Phi_{-}^{(0)}(5)=5$.
\item Case $m=2$. Again by direct inspection, Proposition~\ref{P:Phi_props} and Lemma~\ref{L:growing_sizes} allow to describe elements of $A_2$ in increasing order as
\begin{equation*}
A_2=\{\Phi_{-}(2),\Phi_{-}(5),\Phi_{+}(5),\Phi_{+}(2),5,6\}.
\end{equation*} 
\item Case $m=3$. By construction, $A_3=\Phi_{-}(A_2{\setminus}\{6\})\cup\Phi_{+}(A_1{\setminus}\{6\})\cup\{3,5,6\}$. Proposition~\ref{P:Phi_props} and Lemma~\ref{L:growing_sizes} now yield
\begin{equation*}\label{E:min_AmD_03}
\min (A_3)=\min(\Phi_{-}(A_2))=\Phi_{-}(\min A_2)=\Phi_{-}\Phi_{-}(2).
\end{equation*} 
Analogously the second smallest element is $\Phi_{-}\big(\min (A_2{\setminus}\{\Phi_{-}(2)\})\big)=\Phi_{-}^{(2)}(5)$.
\item General case. Using the hypothesis of induction, the same arguments as before apply so that
\begin{equation}\label{E:gap1_AmD_04}
\min (A_{m+1})=\min(\Phi_{-}(A_{m}))=\Phi_{-}(\min A_{m})=\Phi_{-}\Phi_{-}^{(m-1)}(2)=\Phi_{-}^{(m)}(2)
\end{equation} 
and for the second smallest element $\Phi_{-}\big(\min (A_m{\setminus}\{\Phi_{-}^{(m-1)}(2)\})\big)=\Phi_{-}^{(m)}(5)$.
\end{itemize}
\end{proof}

An immediate consequence of~\eqref{E:gap1_AmD_04} and the fact that $\Phi_{+}$ is monotone decreasing is that $\Phi_{+}\big(\min(A^D_m{\setminus}\{5,6\})\big)=\max\big(\Phi_{+}(A^D_m{\setminus}\{5,6\})\big)$. This provides the largest $2$-series eigenvalue in $A_m^D$, which will be relevant in the proof of the main Lemma~\ref{L:induction_step_D}.
\begin{corollary}\label{C:max_2series_AmD}
For any $m\geq 2$, $\max(A_{m}^D{\setminus}\{5,6\})=\Phi_{+}\Phi_{-}^{(m-2)}(2)$.
\end{corollary}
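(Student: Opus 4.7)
The plan is to reduce the claim directly to the monotonicity of $\Phi_{+}$ together with the identification of $\min A^D_{m-1}$ provided by Lemma~\ref{L:gap1_AmD}. Since the recursive description of $A^D_m$ is slightly different at level~$m=2$, I would treat that as a base case and then use the recursion for $m\geq 3$.

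For $m=2$, the proof of Lemma~\ref{L:gap1_AmD} already enumerates $A_2^D=\{\Phi_{-}(2),\Phi_{-}(5),\Phi_{+}(5),\Phi_{+}(2),5,6\}$, so $\max(A_2^D{\setminus}\{5,6\})=\Phi_{+}(2)=\Phi_{+}\Phi_{-}^{(0)}(2)$, matching the claim. For $m\geq 3$ I would use the recursion
\[
A_m^D{\setminus}\{5,6\}\;=\;\Phi_{-}\bigl(A_{m-1}^D{\setminus}\{6\}\bigr)\,\cup\,\Phi_{+}\bigl(A_{m-1}^D{\setminus}\{6\}\bigr)\,\cup\,\{3\},
\]
and observe by Proposition~\ref{P:Phi_props}\ref{P:Phi_props_1} that the first summand is contained in $[0,2]$ while the second summand is contained in $[3,5]$. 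In particular the maximum of $A_m^D{\setminus}\{5,6\}$ must be attained in $\Phi_{+}(A_{m-1}^D{\setminus}\{6\})$.

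Since $\Phi_{+}$ is strictly decreasing, $\max\Phi_{+}\bigl(A_{m-1}^D{\setminus}\{6\}\bigr)=\Phi_{+}\bigl(\min(A_{m-1}^D{\setminus}\{6\})\bigr)$. By Lemma~\ref{L:gap1_AmD}, $\min A_{m-1}^D=\Phi_{-}^{(m-2)}(2)$, which lies in $[0,2]$ and so is distinct from $6$; hence $\min(A_{m-1}^D{\setminus}\{6\})=\Phi_{-}^{(m-2)}(2)$. Substituting gives $\max(A_m^D{\setminus}\{5,6\})=\Phi_{+}\Phi_{-}^{(m-2)}(2)$, as required.

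The only conceivable obstacle is verifying that neither the element $3$ nor any element of $\Phi_{-}(A_{m-1}^D{\setminus}\{6\})$ can outrank $\Phi_{+}\Phi_{-}^{(m-2)}(2)$, but this follows immediately from the interval ranges of $\Phi_{\pm}$ in Proposition~\ref{P:Phi_props}\ref{P:Phi_props_1}, so the corollary really is a direct consequence of Lemma~\ref{L:gap1_AmD} and the monotonicity of $\Phi_{+}$.
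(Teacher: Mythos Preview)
Your argument is correct and is exactly the approach the paper takes: the corollary is stated there as an immediate consequence of the identification $\min A_{m-1}^D=\Phi_{-}^{(m-2)}(2)$ from Lemma~\ref{L:gap1_AmD} together with the fact that $\Phi_{+}$ is monotone decreasing. Your write-up simply makes explicit the range considerations from Proposition~\ref{P:Phi_props}\ref{P:Phi_props_1} (ruling out $\Phi_{-}(A_{m-1}^D{\setminus}\{6\})$ and~$3$) and the separate base case $m=2$, which the paper leaves implicit.
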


\begin{remark}\label{R::max_2series_AmD}
The eigenvalue $\Phi_{+}\Phi_{-}^{(m-2)}(2)$ is the closest to the eigenvalue $5$ in $A_m^D$. 
\end{remark}

We proceed to prove two technical but fundamental observations in the quest of finding the exact minimal gap in a subset of the Dirichlet spectrum of $\Delta_m$ relevant for Theorem~\ref{T:distance_lower_bound}. The first one concerns the ratio between the second smallest eigenvalue at level $m$ and the spectral gap at level $m+1$: this ratio becomes larger as the level increases. 

\begin{lemma}\label{L:key2}
For any $m\geq 1$,
\begin{equation}\label{E:key2}
\frac{\Phi_{-}^{(m-1)}(5)}{\Phi_{-}^{(m)}(5)-\Phi_{-}^{(m)}(2)}<\frac{\Phi_{-}^{(m)}(5)}{\Phi_{-}^{(m+1)}(5)-\Phi_{-}^{(m+1)}(2)}.
\end{equation}
\end{lemma}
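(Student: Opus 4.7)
The plan is to reduce the stated inequality to a single-variable comparison that can be settled with elementary calculus. To streamline notation I abbreviate $\alpha:=\Phi_{-}^{(m)}(5)$ and $\beta:=\Phi_{-}^{(m)}(2)$, so that the denominator on the right of~\eqref{E:key2} becomes $\Phi_{-}(\alpha)-\Phi_{-}(\beta)$. Because $\Phi_{-}$ is an inverse branch of $R(z)=z(5-z)$, we have $\Phi_{-}^{(m-1)}(5)=R(\alpha)=\alpha(5-\alpha)$, which eliminates all reference to iterates below level $m$ from the left-hand numerator.

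Cross-multiplying and using this identity, the inequality~\eqref{E:key2} becomes
\[
\alpha(5-\alpha)\bigl(\Phi_{-}(\alpha)-\Phi_{-}(\beta)\bigr)<\alpha(\alpha-\beta).
\]
After dividing by $\alpha>0$ and applying Lemma~\ref{L:equidistant} to write $\Phi_{-}(\alpha)-\Phi_{-}(\beta)=\frac{1}{2}(\sqrt{25-4\beta}-\sqrt{25-4\alpha})$, rationalization turns this difference into $\frac{2(\alpha-\beta)}{\sqrt{25-4\beta}+\sqrt{25-4\alpha}}$. Cancelling the common factor $\alpha-\beta>0$ reduces~\eqref{E:key2} to the equivalent statement
\[
10-2\alpha<\sqrt{25-4\alpha}+\sqrt{25-4\beta}.
\]

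To prove this, set $f(x):=\sqrt{25-4x}$ and $g(x):=10-2x-\sqrt{25-4x}$, so that the task is to verify $g(\alpha)<f(\beta)$. Differentiation yields $g'(x)=-2+2/\sqrt{25-4x}$, which is strictly negative on $[0,6)$; hence $g$ is strictly decreasing there. Since $\Phi_{-}$ is strictly increasing and $2<5$, we have $\beta<\alpha$, and Proposition~\ref{P:Phi_props}(i) places both iterates in $(0,2)\subset(0,6)$, so $g(\alpha)<g(\beta)$. It then remains to check $g(\beta)<f(\beta)$; rearranging to $5-\beta<\sqrt{25-4\beta}$ and squaring gives the equivalent inequality $\beta^{2}<6\beta$, which holds because $0<\beta<2$.

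The only non-routine step is spotting that the substitution $\Phi_{-}^{(m-1)}(5)=\alpha(5-\alpha)$ decouples the inequality from all lower iterates; after that the argument is a sandwich $g(\alpha)<g(\beta)<f(\beta)$ powered by Lemma~\ref{L:equidistant} and a single monotonicity check. The main obstacle I anticipate is keeping the cross-multiplication and rationalization clean enough that the final form remains visibly tractable, rather than a tangled algebraic expression.
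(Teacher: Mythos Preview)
Your proof is correct. Both you and the paper hinge on the same key substitution $\Phi_{-}^{(m-1)}(5)=R(\alpha)=\alpha(5-\alpha)$ to eliminate the lower iterate, and both ultimately reduce to an inequality equivalent to ``some iterate is less than $6$''. The difference is in the mechanics: the paper applies the mean value theorem together with strict convexity of $\Phi_{-}$ to bound $\dfrac{1}{\alpha-\beta}$ above by $\dfrac{\Phi_{-}'(\alpha)}{\Phi_{-}(\alpha)-\Phi_{-}(\beta)}$, which reduces the problem directly to the one-variable condition $(5-\alpha)/\sqrt{25-4\alpha}<1\Leftrightarrow\alpha<6$. You instead compute $\Phi_{-}(\alpha)-\Phi_{-}(\beta)$ explicitly via rationalization, land on the two-variable inequality $10-2\alpha<\sqrt{25-4\alpha}+\sqrt{25-4\beta}$, and close it with the sandwich $g(\alpha)<g(\beta)<f(\beta)$. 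Your route is slightly more elementary in that it never invokes convexity; the paper's route is a touch more streamlined because $\beta$ drops out immediately after the MVT step. (Note that you could also have shortened your endgame by observing $\sqrt{25-4\beta}>\sqrt{25-4\alpha}$, which reduces the two-variable inequality to $5-\alpha<\sqrt{25-4\alpha}$ directly---exactly the paper's final check.)
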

\begin{proof}
Let $m\geq 1$. Multiplying and dividing by $\Phi_{-}^{(m)}(5)$ the left hand side of~\eqref{E:key2}, the mean value theorem and the strict convexity of $\Phi_{-}$ yield
\begin{align*}
\frac{\Phi_{-}^{(m-1)}(5)}{\Phi_{-}^{(m)}(5)-\Phi_{-}^{(m)}(2)}
<\frac{\Phi_{-}^{(m-1)}(5)}{\Phi_{-}^{(m)}(5)}\Phi_{-}'(\Phi_{-}^{(m)}(5))\frac{\Phi_{-}^{(m)}(5)}{\Phi_{-}^{(m+1)}(5)-\Phi_{-}^{(m+1)}(2)}.
\end{align*}
It remains to show that 
\begin{equation}\label{E:key2_01}
\frac{\Phi_{-}^{(m-1)}(5)}{\Phi_{-}^{(m)}(5)}\Phi_{-}'(\Phi_{-}^{(m)}(5))<1.
\end{equation}
Using the relation~\eqref{E:ev_level_rel} with $\lambda_{m-1}=\Phi_{-}^{(m-1)}(5)$ and $\lambda_{m}=\Phi_{-}^{(m)}(5)$, and the explicit expression of the derivative $\Phi_{-}'$, condition~\eqref{E:key2_01} is equivalent to
\begin{equation*}
\frac{5-\Phi_{-}^{(m)}(5)}{\sqrt{25-4\Phi_{-}^{(m)}(5)}}<1\quad\Leftrightarrow\quad (5-\Phi_{-}^{(m)}(5))^2<25-4\Phi_{-}^{(m)}(5)\quad\Leftrightarrow\quad \Phi_{-}^{(m)}(5)<6
\end{equation*}
which is clearly true for any $m\geq 1$.
\end{proof}

The second observation relies on the former and pertains the fact that the ratio between the smallest eigenvalue at level $m$ and the spectral gap at level $m+2$ becomes larger as the level increases. The reason for comparing between ``two-level steps'' instead of consecutive levels is not obvious; this insight was gained after a thorough numerical analysis, which also hints that the ratio grows \emph{slower} as the level increases.

\begin{lemma}\label{L:key1}
For any $m\geq 1$,
\begin{equation}\label{E:key1}
\frac{\Phi_{-}^{(m)}(2)}{\Phi_{-}^{(m+2)}(5)-\Phi_{-}^{(m+2)}(2)}>\frac{\Phi_{-}^{(m-1)}(2)}{\Phi_{-}^{(m+1)}(5)-\Phi_{-}^{(m+1)}(2)}.
\end{equation}
\end{lemma}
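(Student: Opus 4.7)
The plan is to decompose the desired inequality multiplicatively so that one factor can be controlled by Lemma~\ref{L:key2} and the remaining factor reduces to a simple cross--series comparison between the $2$- and $5$-series iterates. This bypasses the delicate and not obviously tractable mean value estimates that a direct attack would suggest.

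First I would rewrite
\begin{equation*}
\frac{\Phi_{-}^{(m)}(2)}{\Phi_{-}^{(m+2)}(5)-\Phi_{-}^{(m+2)}(2)}
=\frac{\Phi_{-}^{(m)}(2)}{\Phi_{-}^{(m+1)}(5)}\cdot\frac{\Phi_{-}^{(m+1)}(5)}{\Phi_{-}^{(m+2)}(5)-\Phi_{-}^{(m+2)}(2)},
\end{equation*}
and do the analogous decomposition on the right-hand side of~\eqref{E:key1} by inserting $\Phi_{-}^{(m)}(5)$ in numerator and denominator. Lemma~\ref{L:key2} applied at level $m+1$ bounds the ``second factor'' of the left-hand expression strictly above the corresponding ``second factor'' of the right-hand one, so~\eqref{E:key1} will follow provided I can show
\begin{equation*}
\frac{\Phi_{-}^{(m)}(2)}{\Phi_{-}^{(m+1)}(5)}\;\geq\;\frac{\Phi_{-}^{(m-1)}(2)}{\Phi_{-}^{(m)}(5)}.
\end{equation*}

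To turn this cross--series ratio comparison into something transparent, I would cross-multiply and apply the spectral decimation identities $\Phi_{-}^{(m-1)}(2)=\Phi_{-}^{(m)}(2)(5-\Phi_{-}^{(m)}(2))$ and $\Phi_{-}^{(m)}(5)=\Phi_{-}^{(m+1)}(5)(5-\Phi_{-}^{(m+1)}(5))$ provided by~\eqref{E:ev_level_rel}. Cancelling the common positive factor $\Phi_{-}^{(m)}(2)\,\Phi_{-}^{(m+1)}(5)$, the inequality collapses to $5-\Phi_{-}^{(m+1)}(5)\geq 5-\Phi_{-}^{(m)}(2)$, that is, to the single cross--series estimate
\begin{equation*}
\Phi_{-}^{(m)}(2)\;\geq\;\Phi_{-}^{(m+1)}(5).
\end{equation*}

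Finally I would prove this last inequality by induction on $m\geq 0$. The base case $m=0$ is immediate since $\Phi_{-}(5)=\frac{1}{2}(5-\sqrt{5})<2=\Phi_{-}^{(0)}(2)$. For the inductive step, monotonicity of $\Phi_{-}$ on $[0,6]$ (obvious from~\eqref{E:def_Phi_pm}) yields $\Phi_{-}^{(m+1)}(2)=\Phi_{-}(\Phi_{-}^{(m)}(2))>\Phi_{-}(\Phi_{-}^{(m+1)}(5))=\Phi_{-}^{(m+2)}(5)$. The main obstacle in the whole argument is spotting the ``right'' factorization that isolates the denominator ratio (handled by Lemma~\ref{L:key2}) from the cross--series comparison of $2$- and $5$-series iterates; once that is found, the spectral decimation relation and monotonicity of $\Phi_{-}$ finish the job in a few lines.
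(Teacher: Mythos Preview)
Your proof is correct and takes a genuinely different route from the paper's. The paper uses an \emph{additive} decomposition: it adds and subtracts $\Phi_{-}^{(m+1)}(5)$ in the numerator of the left-hand side, obtaining two summands. The second summand is handled by Lemma~\ref{L:key2} (exactly as you do), while the first summand
\[
\frac{\Phi_{-}^{(m)}(2)-\Phi_{-}^{(m+1)}(5)}{\Phi_{-}^{(m+2)}(5)-\Phi_{-}^{(m+2)}(2)}
\]
is compared to $\dfrac{\Phi_{-}^{(m-1)}(2)-\Phi_{-}^{(m)}(5)}{\Phi_{-}^{(m+1)}(5)-\Phi_{-}^{(m+1)}(2)}$ via the mean value theorem and strict convexity of $\Phi_{-}$. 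Your \emph{multiplicative} decomposition instead reduces the non--Lemma~\ref{L:key2} factor to the single cross-series inequality $\Phi_{-}^{(m)}(2)\geq\Phi_{-}^{(m+1)}(5)$, which you prove cleanly by induction from the decimation identity~\eqref{E:ev_level_rel} and monotonicity of $\Phi_{-}$. This avoids the mean value theorem entirely and is more elementary. It is worth noting that the paper's convexity step tacitly relies on the very same cross-series inequality (to ensure the mean value point for the numerator lies above $\Phi_{-}^{(m)}(5)$, so that the derivative ratio exceeds~$1$ with the correct sign); your argument isolates and proves this fact explicitly, which is a gain in transparency.
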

\begin{proof}
Let $m\geq 1$. Adding and subtracting $\Phi_{-}^{(m+1)}(5)$ in the numerator of the left hand side of~\eqref{E:key1}, applying the mean value theorem and since $\Phi_{-}^{(m)}(5)>\Phi_{-}^{(m+1)}(5)$ we obtain
\begin{align}
\frac{\Phi_{-}^{(m)}(2)}{\Phi_{-}^{(m+2)}(5)-\Phi_{-}^{(m+2)}(2)}&=\frac{\Phi_{-}^{(m)}(2)-\Phi_{-}^{(m+1)}(5)}{\Phi_{-}^{(m+2)}(5)-\Phi_{-}^{(m+2)}(2)}+\frac{\Phi_{-}^{(m+1)}(5)}{\Phi_{-}^{(m+2)}(5)-\Phi_{-}^{(m+2)}(2)}\notag\\
&>\frac{\Phi_{-}^{(m-1)}(2)-\Phi_{-}^{(m)}(5)}{\Phi_{-}^{(m+1)}(5)-\Phi_{-}^{(m+1)}(2)}+\frac{\Phi_{-}^{(m+1)}(5)}{\Phi_{-}^{(m+2)}(5)-\Phi_{-}^{(m+2)}(2)},\label{E:key1_01}
\end{align}
where the inequality follows from strict convexity
. Reordering terms we may write~\eqref{E:key1_01} as
\begin{equation*}
\frac{\Phi_{-}^{(m-1)}(2)}{\Phi_{-}^{(m+1)}(5)-\Phi_{-}^{(m+1)}(2)}+\bigg(\frac{\Phi_{-}^{(m+1)}(5)}{\Phi_{-}^{(m+2)}(5)-\Phi_{-}^{(m+2)}(2)}-\frac{\Phi_{-}^{(m)}(5)}{\Phi_{-}^{(m+1)}(5)-\Phi_{-}^{(m+1)}(2)}\bigg).
\end{equation*}
By virtue of Lemma~\ref{L:key2}, the quantity in brackets is strictly positive, hence~\eqref{E:key1_01} implies
\[
\frac{\Phi_{-}^{(m)}(2)}{\Phi_{-}^{(m+2)}(5)-\Phi_{-}^{(m+2)}(2)}>\frac{\Phi_{-}^{(m-1)}(2)}{\Phi_{-}^{(m+1)}(5)-\Phi_{-}^{(m+1)}(2)}
\]
as we wanted to prove.
\end{proof}

Finally we are in the position to prove the main lemma that is used in the proof of Theorem~\ref{T:distance_lower_bound}. It determines the minimal spacing between any consecutive eigenvalues in level $m+k$ with generation of fixation $\ell \leq m+1$. The reason why it is enough to focus on those is that, if an eigenvalue $\lambda_{m+k}$ has a higher level of fixation, it is always possible to find a suitable level $m'>m$, where its successor $\lambda_{m'+k-1}$ will have generation of fixation $\ell= m'+1$. 

\medskip

The complete argument proceeds by a double induction on $m$ and $k$ starting at level $m=3$, $k=2$. Starting at $k=2$ turns to be key and the proof of this initial case is rather non-trivial, in particular in view of the following observation that is discussed at the end of this section.
\begin{remark}\label{R:lowest_Dm}
Taking on account all Dirichlet eigenvalues at level $m$, i.e. $k=0$, yields
\[
\min\{|\lambda-\lambda'|\colon \lambda,\lambda'\in A_m^D, \lambda\neq \lambda'\}=\Phi_{-}^{(m-1)}(2),
\]
which is \textit{strictly smaller} than~\eqref{E:induction_step_D}. One can also see that this minimal spacing occurs between the eigenvalues $\lambda=5$ and $\lambda'=\Phi_{+}\Phi_{-}^{(m-2)}(2)$. In the case $k=1$ the corresponding minimum is again \textit{strictly smaller} than~\eqref{E:induction_step_D}; there is strong numerical evidence that it corresponds to the spacing between $\lambda=\Phi_{-}(5)$ and $\lambda'=\Phi_{-}\big(\Phi_{+}\Phi_{-}^{(m-2)}(2)\big)$.
\end{remark}

\begin{lemma}\label{L:induction_step_D}
For any $m\geq 3$,
\begin{equation}\label{E:induction_step_D}
\displaystyle \min\{|\lambda'-\lambda|\colon~\lambda,\lambda'\in\Phi_{-}^{(2)}\big(A_m^D{\setminus}\{6\}\big),\;\lambda\neq\lambda'\}=\Phi_{-}^{(m+1)}(5)-\Phi_{-}^{(m+1)}(2).
\end{equation}
The minimum is attained for $\lambda=\Phi_{-}^{(m+1)}(2)$ and $\lambda'=\Phi_{-}^{(m+1)}(5)$.
\end{lemma}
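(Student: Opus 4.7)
The plan is to induct on $m\ge 3$, using the recursion $A_{m+1}^D\setminus\{6\}=\Phi_-(A_m^D\setminus\{6\})\cup\Phi_+(A_m^D\setminus\{6\})\cup\{3,5\}$ together with the ratio monotonicity supplied by Lemmas~\ref{L:key2} and~\ref{L:key1}. The base case $m=3$ is a finite enumeration: one lists the twelve elements of $A_3^D\setminus\{6\}$, applies $\Phi_-^{(2)}$ using the closed form of Lemma~\ref{L:equidistant}, and compares the eleven consecutive gaps directly. This step is more subtle than it looks because, as Remark~\ref{R:lowest_Dm} warns, in $A_m^D$ itself the minimum gap sits at the top pair $(5-\Phi_-^{(m-1)}(2),5)$; two extra applications of $\Phi_-$ are precisely what moves the minimiser to the bottom pair, and this switch has to be checked by hand at $m=3$.

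For the inductive step, split
\[
\Phi_-^{(2)}(A_{m+1}^D\setminus\{6\})=L\cup H,
\]
with low block $L:=\Phi_-^{(3)}(A_m^D\setminus\{6\})\subset[0,\Phi_-^{(3)}(5)]$ and high block $H:=\Phi_-^{(2)}\bigl(\Phi_+(A_m^D\setminus\{6\})\cup\{3,5\}\bigr)\subset[\Phi_-^{(2)}(3),\Phi_-^{(2)}(5)]$. Since $\Phi_-^{(3)}(5)<\Phi_-^{(2)}(3)$, the two blocks are separated, and any cross-block gap exceeds the fixed constant $\Phi_-^{(2)}(3)-\Phi_-^{(3)}(5)$, which dwarfs the claimed minimum for every $m\ge 3$. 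For two points both in $L$, write them as $\Phi_-(p),\Phi_-(q)$ with $p<q$ consecutive in $\Phi_-^{(2)}(A_m^D\setminus\{6\})$: either $(p,q)=(\Phi_-^{(m+1)}(2),\Phi_-^{(m+1)}(5))$ and the image gap is exactly $\Phi_-^{(m+2)}(5)-\Phi_-^{(m+2)}(2)$, or, by the inductive hypothesis, $p\ge\Phi_-^{(m+1)}(5)$ and $q-p\ge\Phi_-^{(m+1)}(5)-\Phi_-^{(m+1)}(2)$, so Lemma~\ref{L:growing_sizes}(i) yields $\Phi_-(q)-\Phi_-(p)\ge\Phi_-^{(m+2)}(5)-\Phi_-^{(m+2)}(2)$.

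The truly hard case is two points both in $H$. Using Corollary~\ref{C:max_2series_AmD} and the identity $\Phi_+(\Phi_-^{(m-1)}(2))=5-\Phi_-^{(m)}(2)$, the narrowest gap inside $\Phi_+(A_m^D\setminus\{6\})\cup\{3,5\}$ is the top pair $(5-\Phi_-^{(m)}(2),5)$ of width $\Phi_-^{(m)}(2)$. A case analysis combining Lemma~\ref{L:growing_sizes}(i) with the observation that strict convexity of $\Phi_-$ and $\Phi_-(0)=0$ force $\Phi_-^{(m)}(5)/\Phi_-^{(m)}(2)>5/2$ shows that this top pair remains the minimiser of the image gaps inside $H$ as well: every competitor is either wider \emph{and} higher, or has its width growth beat the decrease of $\Phi_-^{(2)\prime}$ across $[3,5]$. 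It then remains to prove
\begin{equation*}
\Phi_-^{(2)}(5)-\Phi_-^{(2)}\bigl(5-\Phi_-^{(m)}(2)\bigr)\;\ge\;\Phi_-^{(m+2)}(5)-\Phi_-^{(m+2)}(2),
\end{equation*}
which, rewritten as a lower bound on the ratio $\Phi_-^{(m)}(2)/\bigl(\Phi_-^{(m+2)}(5)-\Phi_-^{(m+2)}(2)\bigr)$, follows from Lemma~\ref{L:key1}: that ratio is strictly increasing in $m$, so the inequality propagates from its verification at $m=3$, which is absorbed into the base case. The main obstacle is exactly this last step — pinning down the minimiser inside $H$ is not a one-line consequence of convexity because $\Phi_-^{(2)\prime}$ varies substantially on $[3,5]$, and the subsequent comparison with the claimed minimum is tight, which is precisely the raison d'être of the technical monotonicity Lemmas~\ref{L:key2} and~\ref{L:key1}.
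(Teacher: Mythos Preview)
Your overall architecture matches the paper's: induction on $m$ with base case $m=3$ checked by enumeration, and the inductive step organized around the decomposition $A_{m+1}^D\setminus\{6\}=\Phi_-(A_m^D\setminus\{6\})\cup\Phi_+(A_m^D\setminus\{6\})\cup\{3,5\}$. Your $L$--block argument is the paper's case~(a), the cross--block separation is correct, and your treatment of the top pair $(5-\Phi_-^{(m)}(2),\,5)$ via Lemma~\ref{L:key1} is exactly the paper's case~(d).

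The gap is in the interior of your $H$--block, i.e.\ pairs $\lambda,\lambda'\in\Phi_-^{(2)}\Phi_+(A_m^D\setminus\{6\})$. You assert first that the minimal gap in $S=\Phi_+(A_m^D\setminus\{6\})\cup\{3,5\}$ sits at the top, and second that it \emph{remains} the minimiser after applying $\Phi_-^{(2)}$; neither claim is proved. The first would require a lower bound on the gaps inside $\Phi_+(A_m^D\setminus\{6\})$, which by Lemma~\ref{L:equidistant} equals a lower bound on the gaps inside $\Phi_-(A_m^D\setminus\{6\})$---and that is precisely the $k=1$ statement that Remark~\ref{R:lowest_Dm} flags as only numerically supported, not available from the inductive hypothesis (which concerns $k=2$). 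The second claim is worse, not better: the top pair lies where $(\Phi_-^{(2)})'$ is \emph{largest} on $[3,5]$, so convexity works against you, and your phrase ``every competitor is either wider and higher'' cannot apply since nothing in $S$ is higher than the top pair. The observation $\Phi_-^{(m)}(5)/\Phi_-^{(m)}(2)>5/2$ does not by itself control the competitor gaps, and Lemma~\ref{L:growing_sizes}(i) goes the wrong way here.

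The paper avoids all of this with a one--line reflection trick (its case~(b)): for $\nu'<\nu$ in $A_m^D\setminus\{6\}$, Lemma~\ref{L:equidistant} gives $\Phi_+(\nu')-\Phi_+(\nu)=\Phi_-(\nu)-\Phi_-(\nu')$, and since $\Phi_+(\nu)>\Phi_-(\nu)$ the strict convexity of $\Phi_-^{(2)}$ yields
\[
\Phi_-^{(2)}\Phi_+(\nu')-\Phi_-^{(2)}\Phi_+(\nu)\;>\;\Phi_-^{(2)}\Phi_-(\nu)-\Phi_-^{(2)}\Phi_-(\nu')\;=\;\Phi_-^{(3)}(\nu)-\Phi_-^{(3)}(\nu').
\]
The right-hand side is a gap between two elements of $\Phi_-^{(2)}(A_m^D\setminus\{6\})$ (because $\Phi_-^{(3)}(\nu)=\Phi_-\bigl(\Phi_-^{(2)}(\nu)\bigr)$ with $\Phi_-^{(2)}(\nu)\in\Phi_-^{(2)}(A_m^D\setminus\{6\})$), so case~(a) and the inductive hypothesis finish the job. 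This is the missing key idea; once you insert it, your proof and the paper's coincide.
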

\begin{proof}
To ease the notation, we write $A_m$ without superscript. 
\begin{itemize}[leftmargin=1em]
\item Case $m=3$.  We start by describing explicitly the elements of the set $A_3{\setminus}\{6\}$ in increasing order
{\small
\[
\{\Phi_{-}^{(2)}(2),\Phi_{-}^{(2)}(5),\Phi_{-}\Phi_{+}(5),\Phi_{-}\Phi_{+}(2),\Phi_{-}(5),3,\Phi_{+}(5),\Phi_{+}^{(2)}(2),\Phi_{+}^{(2)}(5),\Phi_{+}\Phi_{-}(5),\Phi_{+}\Phi_{-}(2),5\}.
\]
}
The size of the spacing between any two consecutive eigenvalues $\Phi_{-}^{(2)}(\lambda)$, $\Phi_{-}^{(2)}(\lambda')$ with $\lambda,\lambda'\in A_3{\setminus}\{6\}$ can be explicitly computed and compared to the size of the spectral gap at level $5$, that is $\Phi_{-}^{(4)}(5)-\Phi_{-}^{(4)}(2)$, see Lemma~\ref{L:gap1_AmD}. Table~\ref{F:induction_step_D_table} shows the \emph{explicit difference} (computed here using Phyton) between the size of each spacing and the spectral (1st) gap.
\begin{table}[H]
\begin{tabular}{l|ccccccc}
gap & 1st-1st & 2nd-1st & 3rd-1st & 4th-1st & 5th-1st & 6th-1st \\ \hline
difference & 0 & 0.0164 & 0.0061 & 7.9131 & 0.0758 & 0.0303\\ \hline\\ \hline
gap & 1st-1st & 7th-1st & 8th-1st & 9th-1st & 10th-1st & 11th-1st \\ \hline
difference & 0 & 0.0039 & 0.0149 & 0.0395 & 0.0108 & 0.0005\\
\end{tabular}
\caption{Values approximated to four digits.}
\label{F:induction_step_D_table}
\end{table}
\item $(m)\Rightarrow(m+1)$. Let $\lambda,\lambda'\in\Phi_{-}^{(2)} (A_{m+1}{\setminus}\{6\})$ with $\lambda<\lambda'$ and $\lambda=\Phi_{-}(\mu)$, $\lambda'=\Phi_{-}(\mu')$ for some $\mu<\mu'$ in $\Phi_{-} (A_{m+1}{\setminus}\{6\})$. Since by construction $A_{m+1}{\setminus}\{6\}=\Phi_{\pm}(A_{m})\cup\{3,5\}$, we have that
\[
\mu,\mu'\in \Phi_{-}^{(2)} (A_{m}{\setminus}\{6\})\cup\{\Phi_{-}(3)\}\cup\Phi_{-}\Phi_{+}(A_{m}{\setminus}\{6\})\cup\{\Phi_{-}(5)\}.
\]
We analyze the possible situations for the gap $\lambda'-\lambda$ based on the subsets $\mu$ and $\mu'$ belong. 
\begin{enumerate}[leftmargin=1.5em,label=(\alph*)]
\item If $\mu,\mu'\in \Phi_{-}^{(2)} (A_{m}{\setminus}\{6\})$, the strict convexity of $\Phi_{-}$ and the induction hypothesis yield
\begin{align*}
\lambda'-\lambda&=\Phi_{-}(\mu')-\Phi_{-}(\mu)\geq \Phi_{-}'(\xi_{\Phi_{-}^{(m+1)}(2),\Phi_{-}^{(m+1)}(5)})(\mu'-\mu)\\
&\geq \Phi_{-}'(\xi_{\Phi_{-}^{(m+1)}(2),\Phi_{-}^{(m+1)}(5)})\big(\Phi_{-}^{(m+1)}(5)-\Phi_{-}^{(m+1)}(2)\big)=\Phi_{-}^{(m+2)}(5)-\Phi_{-}^{(m+2)}(2).
\end{align*}
\item If $\mu,\mu'\in \Phi_{-}\Phi_{+}(A_{m}{\setminus}\{6\})$, then $\mu=\Phi_{-}\Phi_{+}(\nu)$ and $\mu'=\Phi_{-}\Phi_{+}(\nu')$ for some $\nu'<\nu$ belonging to $A_{m}{\setminus}\{6\}$. In particular, c.f.~Lemma~\ref{L:growing_sizes}, $\Phi_{+}(\nu)>\Phi_{-}(\nu)$ which together with the strict convexity of $\Phi_{-}^{(2)}$ and Lemma~\ref{L:equidistant} yields
\begin{align*}
\lambda'-\lambda&=\Phi_{-}^{(2)}\Phi_{+}(\nu')-\Phi_{-}^{(2)}\Phi_{+}(\nu)=\Phi_{-}^{(2)'}(\xi_{\Phi_{+}(\nu),\Phi_{+}(\nu)})(\Phi_{+}(\nu')-\Phi_{+}(\nu))\\
&>\Phi_{-}^{(2)'}(\xi_{\Phi_{-}(\nu'),\Phi_{-}(\nu')})(\Phi_{+}(\nu')-\Phi_{+}(\nu))=\Phi_{-}^{(2)'}(\xi_{\Phi_{-}(\nu'),\Phi_{-}(\nu')})(\Phi_{-}(\nu')-\Phi_{-}(\nu))\\
&=\Phi_{-}^{(3)}(\nu')-\Phi_{-}^{(3)}(\nu).
\end{align*}
Since $\Phi_{-}^{(3)}(\nu'),\Phi_{-}^{(3)}(\nu)\in \Phi_{-}^{(2)} (A_{m}{\setminus}\{6\})$, the previous case (a) applies and hence $\lambda'-\lambda>\Phi_{-}^{(m+2)}(5)-\Phi_{-}^{(m+2)}(2)$.
\item If $\mu=\Phi_{-}(3)$ or $\mu'=\Phi_{-}(3)$, we show that $\mu'=\Phi_{-}\Phi_{+}(5)$ or $\mu=\Phi_{-}^{(2)}(5)$ respectively, whence $\lambda,\lambda'\in\Phi_{-}^{(2)}(A_3{\setminus}\{6\})$ and the claim follows from the induction start $m=3$. Indeed, since $3=\Phi_{+}(6)$, its closest eigenvalues in $A_{m+1}{\setminus}\{6\}$ are $\Phi_{+}(5)$ and $\Phi_{-}(5)$.
\item If $\mu'=\Phi_{-}(5)$, Corollary~\ref{C:max_2series_AmD} implies $\mu=\Phi_{-}\Phi_{+}\Phi_{-}^{(m-1)}(2)$ so that $\lambda=\Phi_{-}^{(2)}\Phi_{+}\Phi_{-}^{(m-1)}(2)$ and $\lambda'=\Phi_{-}^{(2)}(5)$. Using the strict convexity of $\Phi_{-}^{(2)}$ and the explicit expression of the derivative $\Phi_{-}^{(2)'}(z)=\Phi_{-}'(\Phi_{-}(z))\Phi_{-}'(z)$ we obtain
\begin{align*}
\lambda'-\lambda&=\Phi_{-}^{(2)'}(\xi_{\Phi_{+}\Phi_{-}^{(m-1)}(2),5})\big(5-\Phi_{+}\Phi_{-}^{(m-1)}(2)\big)\\
&=\Phi_{-}^{(2)'}(\xi_{\Phi_{+}\Phi_{-}^{(m-1)}(2),5})\Phi_{-}^{(m)}(2)\\
&>\Phi_{-}^{(2)'}(\Phi_{+}\Phi_{-}^{(m-1)}(2))\Phi_{-}^{(m)}(2)\\
&=\Phi_{-}'(\Phi_{-}\Phi_{+}\Phi_{-}^{(m-1)}(2))\Phi_{-}'(\Phi_{+}\Phi_{-}^{(m-1)}(2)))\Phi_{-}^{(m)}(2)\\
&=\frac{\Phi_{-}^{(m)}(2)}{\sqrt{25-4\Phi_{-}\Phi_{+}\Phi_{-}^{(m-1)}(2)}\sqrt{25-4\Phi_{+}\Phi_{-}^{(m-1)}(2))}}.
\end{align*}
Multiplying and dividing the latter by $\Phi_{-}^{(m+2)}(5)-\Phi_{-}^{(m+2)}(2)$ it follows that
{\small
\begin{equation}\label{E:induction_step_D_01}
\lambda'-\lambda>
\frac{\Phi_{-}^{(m)}(2)/\big(\Phi_{-}^{(m+2)}(5)-\Phi_{-}^{(m+2)}(2)\big)}{\sqrt{25-4\Phi_{-}\Phi_{+}\Phi_{-}^{(m-1)}(2)}\sqrt{25-4\Phi_{+}\Phi_{-}^{(m-1)}(2))}}\big(\Phi_{-}^{(m+2)}(5)-\Phi_{-}^{(m+2)}(2)\big).
\end{equation}
}
Note now that $m> 3$, hence $\Phi_{-}\Phi_{+}\Phi_{-}^{(m-1)}(2)>\Phi_{-}\Phi_{+}\Phi_{-}^{(2)}(2)$ and $\Phi_{+}\Phi_{-}^{(m-1)}(2)>\Phi_{+}\Phi_{-}^{(2)}(2)$. Together with Lemma~\ref{L:key1},  the right hand side of~\eqref{E:induction_step_D_01} is bounded below by
\[
\frac{\Phi_{-}^{(3)}(2)/\big(\Phi_{-}^{(5)}(5)-\Phi_{-}^{(5)}(2)\big)}{\sqrt{25-4\Phi_{-}\Phi_{+}\Phi_{-}^{(2)}(2)}\sqrt{25-4\Phi_{+}\Phi_{-}^{(2)}(2)}}\big(\Phi_{-}^{(m+2)}(5)-\Phi_{-}^{(m+2)}(2)\big)
\]
and an explicit computation reveals that 
\[
\frac{\Phi_{-}^{(3)}(2)/\big(\Phi_{-}^{(5)}(5)-\Phi_{-}^{(5)}(2)\big)}{\sqrt{25-4\Phi_{-}\Phi_{+}\Phi_{-}^{(2)}(2)}\sqrt{25-4\Phi_{+}\Phi_{-}^{(2)}(2)}}>1
\]
whence $\lambda'-\lambda>\Phi_{-}^{(m+2)}(5)-\Phi_{-}^{(m+2)}(2)$ as we wanted to prove.
\end{enumerate}
\end{itemize}
\end{proof}

\begin{lemma}\label{L:pre-lowest_D}
For any $m\geq 3$ and $k\geq 2$,
\begin{equation}\label{E:pre-lowest_D}
\displaystyle \min\{|\lambda'-\lambda|\colon~\lambda,\lambda'\in\Phi_{-}^{(k)}\big(A_m^D{\setminus}\{6\}\big)\}=\Phi_{-}^{(m+k-1)}(5)-\Phi_{-}^{(m+k-1)}(2).
\end{equation}
The minimum is attained for $\lambda=\Phi_{-}^{(m+k-1)}(2)$ and $\lambda'=\Phi_{-}^{(m+k-1)}(5)$.
\end{lemma}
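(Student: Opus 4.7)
The plan is to proceed by induction on $k\geq 2$, with base case $k=2$ being exactly Lemma~\ref{L:induction_step_D}. The inductive step should be essentially a one-liner once the convexity estimate in Lemma~\ref{L:growing_sizes}(i) is brought to bear: iterating the strictly convex, monotone increasing map $\Phi_-$ preserves the property that the minimal gap is attained between the two smallest elements of the set.

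For the inductive step I would fix $m\geq 3$ and assume~\eqref{E:pre-lowest_D} holds at level $k$, noting that the minimum is attained between the two smallest elements of $\Phi_{-}^{(k)}(A_m^D{\setminus}\{6\})$, which by Lemma~\ref{L:gap1_AmD} and monotonicity of $\Phi_-$ are precisely $\Phi_{-}^{(m+k-1)}(2)$ and $\Phi_{-}^{(m+k-1)}(5)$. Given distinct $\lambda<\lambda'$ in $\Phi_{-}^{(k+1)}(A_m^D{\setminus}\{6\})$, I write $\lambda=\Phi_-(\mu)$ and $\lambda'=\Phi_-(\mu')$ with $\mu<\mu'$ in $\Phi_{-}^{(k)}(A_m^D{\setminus}\{6\})$; since $\Phi_-$ is monotone, I may restrict attention to pairs $(\mu,\mu')$ that are consecutive in that set.

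I then split into two cases. If $(\mu,\mu')=(\Phi_{-}^{(m+k-1)}(2),\Phi_{-}^{(m+k-1)}(5))$, then $\lambda'-\lambda=\Phi_{-}^{(m+k)}(5)-\Phi_{-}^{(m+k)}(2)$, which matches the claimed minimum and shows it is attained. Otherwise, consecutivity forces $\mu\geq\Phi_{-}^{(m+k-1)}(5)$, so setting $w:=\Phi_{-}^{(m+k-1)}(2)$, $x:=\Phi_{-}^{(m+k-1)}(5)$, $y:=\mu$, $z:=\mu'$, both $w\leq x\leq y\leq z$ and $x-w\leq z-y$ hold, the latter by the induction hypothesis applied to $(\mu,\mu')$. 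Lemma~\ref{L:growing_sizes}(i) then yields $\Phi_-(\mu')-\Phi_-(\mu)\geq\Phi_{-}^{(m+k)}(5)-\Phi_{-}^{(m+k)}(2)$, closing the induction.

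The main obstacle is really the base case $k=2$, namely the delicate Lemma~\ref{L:induction_step_D} already proved above; by comparison, the inductive step is soft and rests entirely on the convexity observation in Lemma~\ref{L:growing_sizes}(i). The one point to be careful about is carrying the stronger statement through the induction—that the minimum is not merely equal to $\Phi_{-}^{(m+k-1)}(5)-\Phi_{-}^{(m+k-1)}(2)$ but is attained at the specific pair of two smallest elements—so that Lemma~\ref{L:growing_sizes}(i) can be invoked at the next stage. This stronger hypothesis is supplied by Lemma~\ref{L:induction_step_D} at the base case and is preserved by the case analysis above.
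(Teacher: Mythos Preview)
Your proposal is correct and follows essentially the same approach as the paper: induction on $k$ with base case $k=2$ given by Lemma~\ref{L:induction_step_D}, and the inductive step handled by the convexity of $\Phi_-$. The paper phrases the step via the mean value theorem rather than invoking Lemma~\ref{L:growing_sizes}\ref{L:g_s-}, and formally sets it up as a double induction (first on $k$ for $m=3$, then on $m$), but since the base case already covers all $m\geq 3$ your single induction on $k$ for fixed $m$ is equivalent and arguably cleaner.
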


\begin{proof}
To ease the notation, we write $A_m$ without superscript. Consider first $m=3$ and apply induction on $k$:
\begin{itemize}[leftmargin=1em]
\item Case $k=2$. See Lemma~\ref{L:induction_step_D}. 
\item $(k)\Rightarrow(k+1)$. Let $\lambda<\lambda'$ belong to $\Phi_{-}^{(k+1)}(A_3{\setminus}\{6\})$. Then, $\lambda=\Phi_{-}(\mu)$ and $\lambda'=\Phi_{-}(\mu')$ for some $\mu,\mu'\in \Phi_{-}^{(k)}(A_3{\setminus}\{6\})$ with $\mu<\mu'$ and $\mu\geq \Phi_{-}^{(k+2)}(2)$, $\mu'\geq \Phi_{-}^{(k+2)}(5)$. Applying the mean value theorem, the strict convexity of $\Phi_{-}$ and the hypothesis of induction yield
\begin{align*}
\lambda'-\lambda&=\Phi_{-}(\xi_{\mu,\mu'}))(\mu'-\mu)\\
&\geq \Phi_{-}(\xi_{\Phi_{-}^{(k+2)}(2),\Phi_{-}^{(k+2)}(5)}))(\Phi_{-}^{(k+2)}(5)-\Phi_{-}^{(k+2)}(2))=\Phi_{-}^{(k+3)}(5)-\Phi_{-}^{(k+3)}(2).
\end{align*}
\end{itemize}
Now we perform induction over the parameter $m$: Assuming that the claim is true for $m$ and any $k\geq 2$, we prove that it also holds for (fixed) $m+1$ and any $k\geq 2$.
\begin{itemize}[leftmargin=1em]
\item Case $k=2$. See Lemma~\ref{L:induction_step_D}.
\item $(k)\Rightarrow(k+1)$. Verbatim to the case $m=3$ substituting $2$ by $m$.
\end{itemize}
\end{proof}

As pointed out in Remark~\ref{R:lowest_Dm}, including all Dirichlet eigenvalues of $\Delta_m$ provides a bound that is \emph{strictly lower} than~\eqref{E:induction_step_D}. Indeed,  Corollary~\ref{C:max_2series_AmD}, Proposition~\ref{P:Phi_props}(i) and Lemma~\ref{L:equidistant} yield
\[
5-\Phi_{+}\Phi_{-}^{(m-2)}(2)=\Phi_{+}(5)-\Phi_{+}\Phi_{-}^{(m-2)}(2)=\Phi_{-}^{(m-1)}(2)
\]
which is \emph{strictly smaller} than $\Phi_{-}^{(m-1)}(5)-\Phi_{-}^{(m-1)}(2)$: 
For $m=1$ this is clear ($2<5-2$) and in general, using the fact that $\Phi_{-}^{(m)}(0)=0$ for any $m\geq 1$, strict convexity and induction yield
\[
\frac{\Phi_{-}^{(m)}(2)}{\Phi_{-}^{(m)}(5)-\Phi_{-}^{(m)}(2)}=\frac{\Phi_{-}^{(m)}(2)-\Phi_{-}^{(m)}(0)}{\Phi_{-}^{(m)}(5)-\Phi_{-}^{(m)}(2)}<\frac{\Phi_{-}^{(m-1)}(2)-\Phi_{-}^{(m-1)}(0)}{\Phi_{-}^{(m-1)}(5)-\Phi_{-}^{(m-1)}(2)}<1.
\]

\subsubsection{Neumann spectrum}
From a general point of view, the description of the Neumann spectrum of $\Delta_m$ is similar the Dirichlet one, however the few changes make the question at hand much easier to tackle. In particular, the absence of $2$-series eigenvalues happens to be specially advantageous. We recall e.g. from~\cite[Proposition 5.5]{Kig98} that the Neumann spectrum can be described recursively as
\begin{equation*}
A_0^N:=\{0,6\}\qquad A_1^N:=\{0,3,6\}
\qquad A_{m}^N:=\Phi_{\pm}(A_{m-1}^N{\setminus} \{6\})\cup\{3,6\}
\quad m\geq 2.
\end{equation*}
Again, note that in this case there are no eigenvalues starting from 2, whereas the eigenvalue $5$ does appear in all $A_m^N$ with $m\geq 2$ since $\Phi_{+}(0)=5$. 

\begin{lemma}\label{L:gap1_AmN}
For any $m\geq 1$, the Neumann spectral gap of $\Delta_m$ equals $\Phi_{-}^{(m-1)}(3)$.
\end{lemma}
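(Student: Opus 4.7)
The plan is to proceed by induction on $m$, exploiting the fact that $0$ is always the minimum of $A_m^N$ (so the Neumann spectral gap equals the second-smallest element of $A_m^N$), together with monotonicity and range bounds for $\Phi_\pm$ from Proposition~\ref{P:Phi_props}.

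The base case $m=1$ is immediate by direct inspection of $A_1^N=\{0,3,6\}$, giving second-smallest element $3=\Phi_{-}^{(0)}(3)$. For the inductive step, assume that the two smallest elements of $A_m^N$ are $0$ and $\Phi_{-}^{(m-1)}(3)$, and consider
\[
A_{m+1}^N=\Phi_{-}(A_m^N{\setminus}\{6\})\,\cup\,\Phi_{+}(A_m^N{\setminus}\{6\})\,\cup\,\{3,6\}.
\]
Since $\Phi_{-}$ is strictly increasing on $[0,6]$ and $\Phi_{-}(0)=0$ (Proposition~\ref{P:Phi_props}\ref{P:Phi_props_1}), the image $\Phi_{-}(A_m^N{\setminus}\{6\})$ inherits the order of $A_m^N{\setminus}\{6\}$; hence its two smallest elements are $0$ and $\Phi_{-}(\Phi_{-}^{(m-1)}(3))=\Phi_{-}^{(m)}(3)$.

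It remains to argue that no element coming from the other two pieces can fall strictly between $0$ and $\Phi_{-}^{(m)}(3)$. Indeed, $\Phi_{+}(A_m^N{\setminus}\{6\})\subseteq [3,5]$ by Proposition~\ref{P:Phi_props}\ref{P:Phi_props_1}, and the extra point $3$ is already $\geq 3$. On the other hand $\Phi_{-}^{(m)}(3)\leq \Phi_{-}(3)=\tfrac{1}{2}(5-\sqrt{13})<2<3$, using Lemma~\ref{L:growing_sizes}\ref{L:g_s-2} (which ensures the iterates of $\Phi_{-}$ stay bounded by $2$ and in fact do not grow). Consequently, the two smallest elements of $A_{m+1}^N$ are $0$ and $\Phi_{-}^{(m)}(3)$, so the spectral gap at level $m+1$ equals $\Phi_{-}^{(m)}(3)-0=\Phi_{-}^{(m)}(3)$, closing the induction.

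No real obstacle is anticipated; the argument is considerably simpler than its Dirichlet counterpart because the presence of the fixed point $\Phi_{-}(0)=0$ pins the bottom of the spectrum, and the absence of $2$-series eigenvalues means the second-smallest element is always produced in the ``obvious'' way by $\Phi_{-}$ acting on the second-smallest element of the previous level.
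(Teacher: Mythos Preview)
Your proof is correct and follows essentially the same approach as the paper, which simply sketches ``by similar arguments as in Lemma~\ref{L:gap1_AmD} one proves by induction that $\min(A_m^N\setminus\{0\})=\Phi_{-}^{(m-1)}(3)$.'' Your write-up is in fact more detailed than the paper's sketch; the only minor simplification is that you can bound $\Phi_{-}^{(m)}(3)<3$ directly from Proposition~\ref{P:Phi_props}\ref{P:Phi_props_1} (since $\Phi_{-}([0,6])\subseteq[0,2]$) without invoking Lemma~\ref{L:growing_sizes}.
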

\begin{proof}
By similar arguments as to those in Lemma~\ref{L:gap1_AmD} one proves by induction that
\begin{equation*}
\min(A_m^N{\setminus}\{0\})=\Phi_{-}^{(m-1)}(3).
\end{equation*}
\end{proof}

Obtaining the minimal spacing between Neumann eigenvalues of $\Delta_m$ is significantly easier than in the Dirichlet case.
\begin{lemma}\label{L:gap_AmN}
For any $m\geq 1$, 
\[
\min\{|\lambda-\lambda'|\colon \lambda,\lambda'\in A^N_m, \lambda\neq \lambda'\}=\Phi_{-}^{(m-1)}(3).
\]
The minimum is attained for $\lambda=0$ and $\lambda'=\Phi_{-}^{(m-1)}(3)$.
\end{lemma}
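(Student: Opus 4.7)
The plan is to induct on $m$. Attainment is immediate from Lemma~\ref{L:gap1_AmN} since $0$ and $\Phi_{-}^{(m-1)}(3)$ both lie in $A_m^N$, so the task is to show that no other pair in $A_m^N$ is closer than $\Phi_{-}^{(m-1)}(3)$. The base case $m=1$ is direct: $A_1^N=\{0,3,6\}$ has both gaps equal to $3=\Phi_{-}^{(0)}(3)$.

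For the inductive step $m\Rightarrow m+1$, I will exploit the recursion $A_{m+1}^N=\Phi_{-}(A_m^N{\setminus}\{6\})\cup\Phi_{+}(A_m^N{\setminus}\{6\})\cup\{3,6\}$ together with Proposition~\ref{P:Phi_props} to partition $A_{m+1}^N$ into four pieces: a ``lower block'' contained in $[0,2]$, the isolated point $3$, an ``upper block'' in $[3,5]$, and the point $6$. Given distinct $\lambda<\lambda'$ in $A_{m+1}^N$, I will case-split on which pieces they occupy. Cross-block pairs (a lower-block element paired with $3$, an upper-block element, or $6$) yield gaps of size at least $1$, which exceeds $\Phi_{-}^{(m)}(3)\leq\Phi_{-}(3)=(5-\sqrt{13})/2<1$ for $m\geq 1$; the pair $(5,6)$ also yields gap $1$.

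The main work lies inside the lower block. Writing $\lambda=\Phi_{-}(\mu)$, $\lambda'=\Phi_{-}(\mu')$ with $\mu<\mu'$ in $A_m^N{\setminus}\{6\}$, the case $\mu=0$ is handled by monotonicity of $\Phi_{-}$ together with $\mu'\geq\Phi_{-}^{(m-1)}(3)$ from Lemma~\ref{L:gap1_AmN}. In the case $\mu>0$ one has $\mu\geq\Phi_{-}^{(m-1)}(3)$ for the same reason, and $\mu'-\mu\geq\Phi_{-}^{(m-1)}(3)$ by the inductive hypothesis; I then apply Lemma~\ref{L:growing_sizes}(i) with $w=0$, $x=\Phi_{-}^{(m-1)}(3)$, $y=\mu$, $z=\mu'$ to conclude $\lambda'-\lambda\geq\Phi_{-}(\Phi_{-}^{(m-1)}(3))=\Phi_{-}^{(m)}(3)$. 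Upper-block gaps reduce to the lower-block argument through Lemma~\ref{L:equidistant}, since $\Phi_{+}(\mu)-\Phi_{+}(\mu')=\Phi_{-}(\mu')-\Phi_{-}(\mu)$.

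The only sub-case requiring care is a pair $(3,\Phi_{+}(\mu))$ with $\mu\in A_m^N{\setminus}\{6\}$: using $3=\Phi_{+}(6)$ and Lemma~\ref{L:equidistant}, this gap equals $2-\Phi_{-}(\mu)$, which is minimized when $\mu=\max(A_m^N{\setminus}\{6\})$. This maximum equals $3$ for $m=1$ and $5$ for $m\geq 2$ (since $\Phi_{+}(0)=5$), giving explicit lower bounds $(\sqrt{13}-1)/2$ and $(\sqrt{5}-1)/2$ respectively. Comparing against $\Phi_{-}^{(m)}(3)$, which is already at level $\Phi_{-}^{(2)}(3)\approx 0.144$ for $m\geq 2$ and monotone decreasing thereafter, both bounds comfortably win. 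I expect this ``boundary'' sub-case to be the only step requiring genuine inspection; everything else is driven by the strict convexity of $\Phi_{-}$ captured in Lemma~\ref{L:growing_sizes}(i) and the reflection identity of Lemma~\ref{L:equidistant}.
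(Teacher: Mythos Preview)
Your proposal is correct and follows essentially the same approach as the paper: induction on $m$ with the decomposition $A_{m+1}^N=\Phi_{-}(A_m^N{\setminus}\{6\})\cup\{3\}\cup\Phi_{+}(A_m^N{\setminus}\{6\})\cup\{6\}$, handling within-block gaps via the inductive hypothesis and Lemma~\ref{L:growing_sizes}\ref{L:g_s-}, reducing upper-block gaps to lower-block ones via Lemma~\ref{L:equidistant}, and checking the boundary gap at $3=\Phi_{+}(6)$ by explicit comparison with $(\sqrt{5}-1)/2$. The only difference is organizational: the paper treats $m=1,2,3$ separately and starts the induction at $m=3$, whereas you launch directly from $m=1$; your version is slightly more economical since the extra base cases are absorbed into the inductive step.
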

%
\begin{proof}
To ease the notation, we write $A^N_m$ without superscript. The first two cases are treated separately and induction starts at $m=3$:
\begin{itemize}[leftmargin=1em]
\item Case $m=1$. By direct inspection, since $A_1=\{0,3,6\}$ the minimum is $3$.
\item Case $m=2$. Writing explicitly $A_2=\{0,\Phi_{-}(3),3,\Phi_{+}(3),5,6\}$ and applying Proposition~\ref{P:Phi_props} and Lemma~\ref{L:equidistant} gives the bounds $3-\Phi_{-}(3)>1>\Phi_{-}(3)$ and $5-\Phi_{+}(3)=\Phi_{-}(3)$. In addition, a direct computation shows $\Phi_{+}(3)-3\approx 1.303>\Phi_{-}(3)\approx 0.6972$.
\item Case $m=3$. By construction, the elements of $A_3$ in increasing order are
\[
\Phi_{-}\big(A_2{\setminus}\{6\}\big)\cup\{3\}\cup\Phi_{+}\big(A_2{\setminus}\{6\}\big)\cup\{6\}.
\]
Given a pair of consecutive eigenvalues $(\lambda,\lambda')$ with $\lambda<\lambda'$: If both belong to $\Phi_{-}(A_{2}{\setminus}\{6\})$, then there exist $\mu',\mu\in A_2{\setminus}\{6\}$ with $\mu'-\mu\geq \Phi_{-}(3)$ from the previous case so that Lemma~\ref{L:growing_sizes}\ref{L:g_s-} yields
\[
\lambda'-\lambda=\Phi_{-}(\mu)-\Phi_{-}(\mu')\geq \Phi_{-}\Phi_{-}(3)=\Phi_{-}^{(2)}(3).
\]
The same is valid for $\lambda,\lambda'\in \Phi_{+}(A_{2}{\setminus}\{6\})$ by Lemma~\ref{L:equidistant}. If $\lambda=3=\Phi_{+}(6)$, then $\lambda'=\Phi_{+}(5)$ and Lemma~\ref{L:equidistant} yields $\lambda'-\lambda=(\sqrt{5}-1)/2\approx 0.618 >\Phi_{-}^{(2)}(3)\approx 0.0351$. If $\lambda'=3$, then $\lambda=\Phi_{-}(5)$ and Proposition~\ref{P:Phi_props} yields $\lambda'-\lambda> 1$.
\item Induction. Writing again
$ A_{m+1}=\Phi_{-}(A_{m}{\setminus}\{6\})\cup\{3\}\cup\Phi_{+}(A_{m}{\setminus}\{6\})\cup\{6\}$ the hypothesis of induction yields
\[
\Phi_{-}^{(m-1)}(3)\leq \mu'-\mu\qquad\text{ for any }\mu<\mu'\;\text{in } A_{m}{\setminus}\{6\}.
\]
If the pair of consecutive eigenvalues $(\lambda,\lambda')$ has both $\lambda,\lambda'\in\Phi_{-}(A_m{\setminus}\{6\})$, there are $\mu\geq 0$ and $\mu'\geq\Phi_{-}^{(m-1)}(3)$ such that 
\[
\lambda'-\lambda=\Phi_{-}(\mu)-\Phi_{-}(\mu')\geq \Phi_{-}\Phi_{-}^{(m-1)}(3)=\Phi_{-}^{(m)}(3),
\]
where the inequality follows from Lemma~\ref{L:growing_sizes}. The same applies to $\lambda,\lambda'\in\Phi_{+}(A_m{\setminus}\{6\})$ by virtue of Lemma~\ref{L:equidistant}. Further, since $3=\Phi_{+}(6)$, its closest eigenvalues in $A_{m+1}$ are $\Phi_{-}(5)$ and $\Phi_{+}(5)$. These gaps are included in the case $m=3$, and bounded below by $\Phi_{-}^{(m)}(3)$ by the strict convexity of $\Phi_{-}$. Finally, we also note $d(\Phi_{+}(A_{m}{\setminus}\{6\}),\{6\})=6-5=1>\Phi_{-}^{(m)}(3)$, hence the proof is complete. 
\end{itemize}
\end{proof}

\section{Conclusion and final remarks}\label{S:end}
Laplace operators on fractals and their spectrum are a recurrent object of study in the physics literature~\cite{RT82,Mal95,ADT09,DGV12,BRBC20}. The present paper investigates the size of the smallest possible gap between any two consecutive Dirichlet or Neumann eigenvalues of the Laplacian on the Sierpinski gasket, which turns out to coincide with the corresponding spectral gap, that is the distance between the first two eigenvalues; see Theorem~\ref{T:distance_lower_bound}. The proof relies on the properties of the dynamical system that describes the spectrum and it opens the possibility that a similar result is true for a class of fractals whose spectrum enjoys similar properties. One possible starting point would be to consider fractals whose Laplacian admits spectral decimation, and find general conditions for the associated inverse functions that would produce the desired outcome.

\bigskip

\subsection*{Acknowledgments}
The author thanks greatly A.Teplyaev for very inspiring discussions.

\bibliographystyle{amsplain}
\bibliography{Spectral_gap_SG}
\end{document}